\documentclass[12pt]{article}

\usepackage[T1]{fontenc}
\usepackage[utf8]{inputenc}
\usepackage{lmodern}
\usepackage{amsmath,amsthm,amsfonts,amssymb}
\usepackage[a4paper,margin=27mm]{geometry}
\usepackage{microtype}
\usepackage[hidelinks]{hyperref}

\numberwithin{equation}{section}

\newtheorem{theorem}{Theorem}[section]
\newtheorem{lemma}[theorem]{Lemma}
\newtheorem{proposition}[theorem]{Proposition}
\newtheorem{corollary}[theorem]{Corollary}

\theoremstyle{definition}
\newtheorem{definition}[theorem]{Definition}
\newtheorem{example}[theorem]{Example}
\theoremstyle{remark}
\newtheorem{remark}[theorem]{Remark}

\newcommand{\Z}{\mathbb Z}
\newcommand{\one}[1]{\mathbf 1_{\{#1\}}}
\newcommand{\Ccal}{\mathcal C}
\newcommand{\Rcal}{\mathcal R}
\newcommand{\Zcal}{\mathcal Z}
\newcommand{\Ecal}{\mathcal E}
\newcommand{\Bcal}{\mathcal B}

\hypersetup{
  pdftitle={
    Exact Formulas for Restricted Coprime Representations
    of Even Integers with Squarefree Modulus 6P
  },
  pdfauthor={Andres M. Salazar},
  pdfkeywords={
    additive representations,
    coprimality,
    Chinese remainder theorem,
    canonical residues,
    finite local densities,
    quasipolynomials,
    residue-class coverings
  }
}

\title{
Exact Formulas for Restricted Coprime Representations\\
of Even Integers with Squarefree Modulus
\texorpdfstring{$6P$}{6P}
}

\author{
Andr\'es M. Salazar\thanks{
\texttt{andresmsalazar@javerianacali.edu.co}
}\\
Department of Natural Sciences and Mathematics\\
Pontificia Universidad Javeriana Cali, Colombia
}

\date{}

\begin{document}

\maketitle

\begin{abstract}
Let $p_1,\dots,p_r\geq 5$ be distinct primes, let
$P=p_1\cdots p_r$, and put $M=6P$. For a positive integer $n$,
let $g_P(2n)$ denote the number of unordered representations
\[
2n=h+k,
\qquad
1\leq h\leq k,
\]
such that
\[
\gcd(h,M)=\gcd(k,M)=1.
\]
Using the canonical remainder operator
\[
\delta_q(x)=x-q\left\lfloor\frac{x}{q}\right\rfloor,
\]
we identify the two local residue classes excluded by each prime
$p_i$. Their possible collision is characterized exactly by
$\delta_{p_i}(n)=0$.

An inclusion--exclusion argument combined with the Chinese
remainder theorem gives an exact formula containing at most
$3^r$ terms. We prove that the associated finite correlation
has cardinality
\[
\kappa_P(n)=
\left(1+\one{\delta_3(n)=0}\right)
\prod_{i=1}^{r}
\left(p_i-2+\one{\delta_{p_i}(n)=0}\right),
\]
and obtain the affine identity
\[
g_P\bigl(2(n+6P)\bigr)-g_P(2n)=\kappa_P(n).
\]
We also establish an exact finite-density decomposition with
uniform error smaller than $3^r-1$, a half-period positivity
theorem, and a product-cutoff criterion.

Finally, we formulate the covering and paired-gap interpretations
of the problem and identify precisely the deterministic boundary
that appears when all primes not exceeding $\sqrt{2n}$ are
included. No probabilistic independence assumption is used.
\end{abstract}

\medskip

\noindent
\textbf{Keywords.}
Restricted additive representations, coprimality, Chinese
remainder theorem, canonical residues, finite local densities,
quasipolynomials, residue-class coverings.

\medskip

\noindent
\textbf{2020 Mathematics Subject Classification.}
Primary 11B75.
Secondary 11A07, 11P32.

\section{Introduction}

Representations of integers as sums of arithmetically restricted
summands are a classical subject of additive number theory
\cite{Nathanson1996}.
The restriction considered in this paper is finite and local.
Given
\[
P=p_1p_2\cdots p_r,
\]
where $p_1,\dots,p_r\geq5$ are distinct primes, we count
representations of $2n$ in which neither summand is divisible
by $2$, by $3$, or by any prime dividing $P$.

The one-prime case was studied in \cite{Salazar2026}. The purpose
of the present paper is to pass from the modulus $6p$ to the
squarefree modulus $6P$ while retaining an exact and explicitly
computable formula. We also verify directly that the resulting
expression reduces to the formula of \cite{Salazar2026} when
$r=1$.

There are four principal results.

First, the admissible summands are parametrized by a single
integer $z$. For every prime $p_i$, exactly two residue classes
of $z$ are potentially excluded. These two classes coincide
exactly when $p_i$ divides $n$.

Second, inclusion--exclusion and the Chinese remainder theorem
produce an exact formula with at most $3^r$ terms. For fixed $P$,
the number of arithmetic operations required by the formula does
not grow with $n$. When $r$ is regarded as a variable, the
dependence on $r$ is exponential and should not be described as
constant time.

Third, the exact local correlation has cardinality
\[
\kappa_P(n)=
\left(1+\one{\delta_3(n)=0}\right)
\prod_{i=1}^{r}
\left(p_i-2+\one{\delta_{p_i}(n)=0}\right).
\]
This quantity is simultaneously the affine increment over one
period and the cardinality defining the corresponding finite
local density. It can also be written as a finite singular factor
of Hardy--Littlewood type.

Fourth, the finite-density decomposition gives a positivity
criterion, while a separate half-period theorem yields the
product cutoff
\[
P\leq\sqrt{2n}.
\]
This product condition implies
\[
\max_{1\leq i\leq r}p_i\leq\sqrt{2n}
\]
but the converse fails when many primes are included. The open
regime is therefore the broader hypothesis in which only the
largest prime is bounded. Its extremal case is obtained by
including every prime up to $\sqrt{2n}$, and this reaches a genuine
Goldbach-type boundary.

The paper remains deterministic throughout. Statistical models
for the covering event are deliberately deferred until a sample
space, a limiting regime, and the dependence imposed by the
Chinese remainder theorem have been specified.

\section{Canonical residues and the
\texorpdfstring{$6$}{6}-wheel parametrization}

Throughout the paper, let
\[
P=\prod_{i=1}^{r}p_i,
\qquad
M=6P,
\]
where $r\geq1$ and $p_1,\dots,p_r\geq5$ are distinct primes.

For every positive integer $n$, define
\begin{equation}
\label{eq:g-definition}
g_P(2n)
=
\#\left\{
(h,k)\in\Z_{>0}^{\,2}
\mathrel{}\middle|\mathrel{}
h+k=2n,\ 
h\leq k,\ 
\gcd(h,M)=\gcd(k,M)=1
\right\}.
\end{equation}

Selecting the smaller summand gives the equivalent expression
\begin{equation}
\label{eq:g-short-interval}
g_P(2n)
=
\#\left\{
h\in\Z
\mathrel{}\middle|\mathrel{}
1\leq h\leq n,\ 
\gcd(h,M)=\gcd(2n-h,M)=1
\right\}.
\end{equation}
This form will be used later in the periodic and paired-gap
interpretations.

\subsection{Canonical residues and local parameters}

For $q\in\Z_{>0}$ and $x\in\Z$, define the canonical remainder
operator by
\begin{equation}
\label{eq:canonical-delta}
\delta_q(x)
=
x-q\left\lfloor\frac{x}{q}\right\rfloor.
\end{equation}
Thus
\[
0\leq\delta_q(x)\leq q-1.
\]
Moreover, for all $x,y\in\Z$,
\begin{equation}
\label{eq:delta-divisibility}
\delta_q(x)=\delta_q(y)
\quad\Longleftrightarrow\quad
q\mid x-y.
\end{equation}
In particular,
\[
\delta_q(x)=0
\quad\Longleftrightarrow\quad
q\mid x.
\]

The definition includes $q=1$, for which $\delta_1(x)=0$. This
convention will be useful when the Chinese-remainder modulus is
an empty product.

For every prime $p\geq5$, define
\begin{equation}
\label{eq:ab-definition}
a(p)
=
\frac{p\delta_6(p)-1}{6},
\qquad
b(p)
=
p-1-a(p).
\end{equation}

Since a prime $p\geq5$ satisfies
$\delta_6(p)\in\{1,5\}$, the parameters are explicitly given by
\[
\begin{array}{c|c|c|c}
\delta_6(p) & p & a(p) & b(p)\\
\hline
1 & 6s+1 & s & 5s\\
5 & 6s+5 & 5s+4 & s
\end{array}
\]
for a suitable nonnegative integer $s$. In particular,
$a(p),b(p)\in\{0,\dots,p-1\}$, and
\begin{align}
a(p)+b(p)
&=p-1,
\label{eq:ab-identities}
\\
6a(p)+1
&=p\delta_6(p),
\label{eq:a-identity}
\\
6b(p)+5
&=p\bigl(6-\delta_6(p)\bigr).
\label{eq:b-identity}
\end{align}

The parameters $a(p)$ and $b(p)$ identify the unique canonical
residues of $z$ for which $6z+1$ and $6z+5$, respectively, are
divisible by $p$.

\begin{lemma}
\label{lem:local-zero}
For every $z\in\Z$,
\begin{equation}
\label{eq:a-local-zero}
\delta_p(6z+1)=0
\quad\Longleftrightarrow\quad
\delta_p(z)=a(p),
\end{equation}
and
\begin{equation}
\label{eq:b-local-zero}
\delta_p(6z+5)=0
\quad\Longleftrightarrow\quad
\delta_p(z)=b(p).
\end{equation}
\end{lemma}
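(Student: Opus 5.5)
The plan is to reduce each equivalence to a congruence modulo $p$ and then use the identities \eqref{eq:a-identity} and \eqref{eq:b-identity} together with the invertibility of $6$ modulo $p$. Since $p\geq5$ is prime, $\gcd(6,p)=1$, so multiplication by $6$ is injective on residues modulo $p$. This is the only arithmetic input needed.

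\emph{First equivalence \eqref{eq:a-local-zero}.}
\begin{enumerate}
\item By \eqref{eq:delta-divisibility}, the condition $\delta_p(6z+1)=0$ is equivalent to $p\mid 6z+1$.
\item By \eqref{eq:a-identity}, $6a(p)+1=p\,\delta_6(p)\equiv0\pmod p$. Subtracting,
\[
6z+1-(6a(p)+1)=6\bigl(z-a(p)\bigr),
\]
so $p\mid 6z+1$ holds if and only if $p\mid 6\bigl(z-a(p)\bigr)$.
\item Since $p$ is prime and $p\nmid 6$, this is equivalent to $p\mid z-a(p)$.
\item Again by \eqref{eq:delta-divisibility}, this means $\delta_p(z)=\delta_p(a(p))$.
\item Finally, $\delta_p(a(p))=a(p)$ because $0\leq a(p)\leq p-1$, as recorded after the table.
\end{enumerate}

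\emph{Second equivalence \eqref{eq:b-local-zero}.} The argument is identical:
\begin{enumerate}
\item By \eqref{eq:b-identity}, $6b(p)+5=p\bigl(6-\delta_6(p)\bigr)\equiv0\pmod p$.
\item Hence $6z+5-(6b(p)+5)=6\bigl(z-b(p)\bigr)$, and the same chain of equivalences applies.
\item The last step uses $b(p)\in\{0,\dots,p-1\}$, which follows from $b(p)=p-1-a(p)$ and $0\leq a(p)\leq p-1$.
\end{enumerate}

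There is no real obstacle here. The only points needing care are confirming the range of $a(p)$ and $b(p)$, which is read off from the table (for example $5s+4\leq 6s+4$), and invoking primality to cancel the factor $6$. If one wanted to avoid relying on the table, one could verify \eqref{eq:a-identity} directly in the two cases $\delta_6(p)\in\{1,5\}$, which also shows that $a(p)$ is an integer.
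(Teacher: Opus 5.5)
Your proof is correct and follows essentially the same route as the paper: use \eqref{eq:a-identity} and \eqref{eq:b-identity} to see that $p$ divides $6a(p)+1$ and $6b(p)+5$, then cancel the factor $6$ using $\gcd(6,p)=1$. You also state explicitly that $\delta_p(a(p))=a(p)$ and $\delta_p(b(p))=b(p)$, because both parameters lie in $\{0,\dots,p-1\}$; the paper leaves this step implicit.
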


\begin{proof}
By \eqref{eq:a-identity}, the integer $6a(p)+1$ is divisible
by $p$. Since $\gcd(6,p)=1$, multiplication by $6$ permutes the
canonical residue classes under $\delta_p$. Consequently,
\[
p\mid6z+1
\quad\Longleftrightarrow\quad
p\mid z-a(p),
\]
which proves \eqref{eq:a-local-zero}.

The second equivalence follows in the same way from
\eqref{eq:b-identity}.
\end{proof}

\subsection{The \texorpdfstring{$6$}{6}-wheel parametrization}

Put
\[
e(n)=\delta_3(n)
\]
and define
\begin{equation}
\label{eq:m-definition}
m(n)
=
\begin{cases}
\dfrac{n}{3}-1,
&
e(n)=0,
\\[6pt]
\dfrac{n-1}{3},
&
e(n)=1,
\\[6pt]
\dfrac{n-5}{3},
&
e(n)=2.
\end{cases}
\end{equation}
Each branch is integral. For every positive integer $n$, one has
$m(n)\geq-1$, and equality occurs only when $n=2$. Define
\begin{equation}
\label{eq:Z-definition}
\Zcal(n)
=
\left\{
z\in\Z
\mathrel{}\middle|\mathrel{}
0\leq z\leq m(n)
\right\}.
\end{equation}
This definition automatically gives
\[
\Zcal(2)=\varnothing,
\]
because $m(2)=-1$.

For $z\in\Zcal(n)$, define $H_z(n)$ and $K_z(n)$ by
\begin{equation}
\label{eq:parametrization-table}
\begin{array}{c|c|c}
e(n) & H_z(n) & K_z(n)\\
\hline
0
&
6z+1
&
6\bigl(m(n)-z\bigr)+5
\\[4pt]
1
&
6z+1
&
6\bigl(m(n)-z\bigr)+1
\\[4pt]
2
&
6z+5
&
6\bigl(m(n)-z\bigr)+5
\end{array}
\end{equation}

In every row,
\begin{equation}
\label{eq:parametrization-sum}
H_z(n)+K_z(n)=2n.
\end{equation}
The inequalities
\[
z\geq0
\qquad\text{and}\qquad
m(n)-z\geq0
\]
ensure that both summands are positive. Conversely, positivity
of the two summands forces $0\leq z\leq m(n)$.

\begin{proposition}
\label{prop:six-wheel}
Suppose that $\delta_3(n)=0$. Then
\[
z\longmapsto\{H_z(n),K_z(n)\}
\]
is a bijection from $\Zcal(n)$ onto the unordered
representations of $2n$ whose two summands are coprime to $6$.

Suppose that $\delta_3(n)\in\{1,2\}$. Then the fibers of the same
map are precisely the orbits of the involution
\[
\tau_n(z)=m(n)-z.
\]
A fixed parameter exists exactly when $m(n)$ is even. When it
exists, it gives
\[
H_z(n)=K_z(n)=n.
\]
\end{proposition}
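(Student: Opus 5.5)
The plan is to work directly with residues modulo $6$. Every integer coprime to $6$ is uniquely of the form $6w+1$ or $6w+5$ with $w\in\Z$, and it is positive exactly when $w\ge0$. Let $(h,k)$ be a representation with $h+k=2n$ and $\gcd(hk,6)=1$. Then $h+k\equiv 2n \pmod 6$, and we split according to $e(n)=\delta_3(n)$, which determines $2n\bmod 6\in\{0,2,4\}$.

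When $e(n)=0$ we have $2n\equiv0\pmod 6$. The only residue pair summing to $0$ is $\{1,5\}$, so exactly one summand is $\equiv1$ and the other is $\equiv5\pmod 6$. Write the summand that is $\equiv1$ as $6z+1$; the other is then $2n-6z-1=6(m(n)-z)+5$, using $m(n)=n/3-1$. By the remark after \eqref{eq:parametrization-sum}, positivity of both summands is equivalent to $0\le z\le m(n)$. This gives a well-defined inverse map from representations to $\Zcal(n)$. The two summands have different residues, so the unordered pair determines $z$ without ambiguity, and the map is bijective.

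When $e(n)=1$ we have $2n\equiv2\pmod 6$, which forces both summands to be $\equiv1$. When $e(n)=2$ we have $2n\equiv4$, which forces both to be $\equiv5$. In either case any representation equals $\{H_z(n),K_z(n)\}$ for some $z\in\Zcal(n)$, obtained by taking $z$ from either summand. Surjectivity therefore again follows from the positivity remark.

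For the fibers in these two cases, the table \eqref{eq:parametrization-table} shows that $K_z(n)=H_{\tau_n(z)}(n)$ and $H_z(n)=K_{\tau_n(z)}(n)$. Hence $z$ and $\tau_n(z)$ have the same image, and $\tau_n$ preserves $\Zcal(n)$. Conversely, suppose $\{H_z,K_z\}=\{H_{z'},K_{z'}\}$. Then either $H_z=H_{z'}$, which gives $z=z'$ because $w\mapsto 6w+c$ is injective, or $H_z=K_{z'}=H_{\tau_n(z')}$, which gives $z=\tau_n(z')$. So the fibers are exactly the $\tau_n$-orbits. A fixed point satisfies $2z=m(n)$, so it exists iff $m(n)$ is even (and then $z=m(n)/2\in\Zcal(n)$, since $m(n)\ge0$ whenever it is even and $\geq -1$). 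At a fixed point $H_z=K_z$, and \eqref{eq:parametrization-sum} gives $H_z=K_z=n$.

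No step is a real obstacle. The point needing the most care is the fiber analysis in the cases $e(n)\in\{1,2\}$: I must check that $H$ and $K$ use the same offset ($+1$ or $+5$), so that $K_z=H_{\tau_n(z)}$ holds literally. I also need the edge case $n=2$: there $\Zcal(2)=\varnothing$, $m(2)=-1$ is odd, and $4=1+3$ has no admissible representation, so the statements hold vacuously.
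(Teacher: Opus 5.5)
Your proposal is correct and follows essentially the same route as the paper. You reduce modulo $6$ to the residue pairs $\{1,5\}$, $\{1,1\}$, $\{5,5\}$, parametrize by the summand $6z+1$ (or $6z+c$), use positivity to confine $z$ to $\Zcal(n)$, and identify the fibers with $\tau_n$-orbits via $K_z(n)=H_{\tau_n(z)}(n)$. You are slightly more explicit than the paper on two points: you give the injectivity argument for fibers, and you check that $m(n)/2$ actually lies in $\Zcal(n)$ when $m(n)$ is even.
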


\begin{proof}
Every integer coprime to $6$ has canonical remainder $1$ or $5$
under $\delta_6$. Since
\[
\delta_6(2n)
=
\begin{cases}
0, & \delta_3(n)=0,\\
2, & \delta_3(n)=1,\\
4, & \delta_3(n)=2,
\end{cases}
\]
the only possible pairs of canonical residues of two summands
coprime to $6$ are
\[
(1,5)\ \text{or}\ (5,1),
\qquad
(1,1),
\qquad
(5,5),
\]
respectively.

If $\delta_3(n)=0$, exactly one summand has canonical remainder
$1$ under $\delta_6$. Writing that summand uniquely as $6z+1$
gives the first row of \eqref{eq:parametrization-table}. The
positivity conditions give precisely $z\in\Zcal(n)$. Hence every
unordered representation occurs exactly once.

If $\delta_3(n)=1$, both summands have canonical remainder $1$.
If $\delta_3(n)=2$, both have canonical remainder $5$. In either
case,
\[
H_{m(n)-z}(n)=K_z(n),
\qquad
K_{m(n)-z}(n)=H_z(n).
\]
Thus interchanging the summands corresponds exactly to replacing
$z$ by $m(n)-z$, and the fibers are the orbits of $\tau_n$.

A fixed parameter satisfies
\[
z=m(n)-z,
\]
or equivalently $2z=m(n)$. At such a parameter,
\eqref{eq:parametrization-sum} gives
\[
H_z(n)=K_z(n)=n.
\]

Finally, when $n=2$, the parameter set is empty. The only
unordered decompositions of $4$ into positive integers are
\[
4=1+3
\qquad\text{and}\qquad
4=2+2,
\]
and neither decomposition has both summands coprime to $6$.
\end{proof}

\section{Local exclusions and the exact CRT formula}

Proposition~\ref{prop:six-wheel} reduces the original
representation problem to the admissibility of the parameters
$z\in\Zcal(n)$. For each prime divisor $p_i$ of $P$,
divisibility of either summand excludes at most two canonical
residue classes of $z$. We now identify these classes, determine
exactly when they coincide, and combine the resulting local
conditions by inclusion--exclusion and the Chinese remainder
theorem.

For brevity, write
\[
a_i=a(p_i),
\qquad
b_i=b(p_i).
\]

For every $i\in\{1,\dots,r\}$, define the two potentially
excluded canonical residues $x_i(n)$ and $y_i(n)$ by
\begin{equation}
\label{eq:xy-definition}
\begin{array}{c|c|c}
e(n) & x_i(n) & y_i(n)\\
\hline
0
&
a_i
&
\delta_{p_i}\bigl(m(n)-b_i\bigr)
\\[4pt]
1
&
a_i
&
\delta_{p_i}\bigl(m(n)-a_i\bigr)
\\[4pt]
2
&
b_i
&
\delta_{p_i}\bigl(m(n)-b_i\bigr)
\end{array}
\end{equation}

Indeed, Lemma~\ref{lem:local-zero} and the parametrization
\eqref{eq:parametrization-table} give, for every
$z\in\Zcal(n)$,
\begin{equation}
\label{eq:local-exclusion}
p_i\mid H_z(n)K_z(n)
\quad\Longleftrightarrow\quad
\delta_{p_i}(z)\in\{x_i(n),y_i(n)\}.
\end{equation}

Thus each prime $p_i$ excludes at most two canonical residue
classes of the parameter $z$. These two classes need not always
be distinct.

\begin{lemma}[Collision lemma]
\label{lem:collision}
For every $i\in\{1,\dots,r\}$,
\begin{equation}
\label{eq:collision}
x_i(n)=y_i(n)
\quad\Longleftrightarrow\quad
\delta_{p_i}(n)=0.
\end{equation}
\end{lemma}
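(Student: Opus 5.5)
We prove the collision lemma by translating both sides into a congruence modulo $p_i$ and then checking the three rows of \eqref{eq:xy-definition} separately. By \eqref{eq:delta-divisibility}, the left-hand side $x_i(n)=y_i(n)$ is equivalent to
\[
p_i \mid x_i(n)-\bigl(m(n)-c_i\bigr),
\]
where $c_i\in\{a_i,b_i\}$ is the parameter appearing inside $\delta_{p_i}$ in the relevant row; since $x_i(n)$ is already a canonical residue, no further reduction is needed. We then multiply by $6$, which is harmless because $\gcd(6,p_i)=1$. Finally we express $6m(n)$ in terms of $n$ via \eqref{eq:m-definition}, and $6a_i$, $6b_i$ via \eqref{eq:a-identity} and \eqref{eq:b-identity}.

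\emph{Row $e(n)=0$.} The condition is $p_i\mid a_i+b_i-m(n)$. By \eqref{eq:ab-identities} we have $a_i+b_i=p_i-1\equiv -1$, so the condition becomes $p_i\mid m(n)+1=n/3$. This is equivalent to $p_i\mid n$, since $p_i\neq 3$.

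\emph{Row $e(n)=1$.} The condition is $p_i\mid 2a_i-m(n)$. Multiplying by $6$ gives $12a_i-6m(n)=2(6a_i+1)-2-2(n-1)$. The first term is a multiple of $p_i$ by \eqref{eq:a-identity}, so the expression is congruent to $-2n$ modulo $p_i$. Since $p_i$ is odd, the condition becomes $p_i\mid n$.

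\emph{Row $e(n)=2$.} The condition is $p_i\mid 2b_i-m(n)$. Multiplying by $6$ gives $12b_i-6m(n)=2(6b_i+5)-10-2(n-5)$. The first term is a multiple of $p_i$ by \eqref{eq:b-identity}, so the expression is congruent to $-2n$. Again the condition becomes $p_i\mid n$.

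Every step in these three chains is reversible, because $6$ and $2$ are invertible modulo $p_i$. Hence each case yields the equivalence $x_i(n)=y_i(n)\Leftrightarrow\delta_{p_i}(n)=0$, which is \eqref{eq:collision}. The only real care point is the bookkeeping of the constants $1$ and $5$ in each row, so that the $a$- or $b$-identity absorbs them exactly.
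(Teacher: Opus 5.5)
Your proof is correct and follows essentially the same route as the paper's: a row-by-row reduction of $x_i(n)=y_i(n)$ to a divisibility condition modulo $p_i$. The first row uses $a_i+b_i=p_i-1$, and the other two use the identities \eqref{eq:a-identity} and \eqref{eq:b-identity}. The only cosmetic difference is that you multiply by $6$, leaving $-2n$ and needing $p_i$ odd, where the paper multiplies by $3$ and obtains $n-(6a_i+1)$ or $n-(6b_i+5)$ directly.
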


\begin{proof}
Suppose first that $e(n)=0$. By
\eqref{eq:xy-definition}, equality of the two canonical residues
is equivalent to
\[
\delta_{p_i}(a_i)
=
\delta_{p_i}\bigl(m(n)-b_i\bigr).
\]
Using \eqref{eq:delta-divisibility} and
$a_i+b_i=p_i-1$, this is equivalent to
\[
p_i\mid m(n)+1.
\]
Since
\[
n=3\bigl(m(n)+1\bigr)
\]
and $p_i\neq3$, the last condition is equivalent to
$\delta_{p_i}(n)=0$.

Suppose now that $e(n)=1$. Equality of the excluded residues is
equivalent to
\[
p_i\mid m(n)-2a_i.
\]
Since $n=3m(n)+1$,
\[
3\bigl(m(n)-2a_i\bigr)
=
n-\bigl(6a_i+1\bigr).
\]
By \eqref{eq:a-identity}, the quantity $6a_i+1$ is divisible by
$p_i$. Hence
\[
p_i\mid m(n)-2a_i
\quad\Longleftrightarrow\quad
p_i\mid n.
\]

Finally, suppose that $e(n)=2$. Equality is now equivalent to
\[
p_i\mid m(n)-2b_i.
\]
Because $n=3m(n)+5$,
\[
3\bigl(m(n)-2b_i\bigr)
=
n-\bigl(6b_i+5\bigr).
\]
The quantity $6b_i+5$ is divisible by $p_i$ by
\eqref{eq:b-identity}. Therefore,
\[
p_i\mid m(n)-2b_i
\quad\Longleftrightarrow\quad
p_i\mid n.
\]
This completes the proof.
\end{proof}

Define the collision indicator
\begin{equation}
\label{eq:epsilon-definition}
\varepsilon_i(n)
=
\one{\delta_{p_i}(n)=0}.
\end{equation}

The local admissibility indicator can then be written uniformly
as
\begin{equation}
\label{eq:local-indicator}
\one{
\delta_{p_i}(z)\notin\{x_i(n),y_i(n)\}
}
=
1
-
\bigl(1-\varepsilon_i(n)\bigr)
\one{\delta_{p_i}(z)=x_i(n)}
-
\one{\delta_{p_i}(z)=y_i(n)}.
\end{equation}

If $x_i(n)\neq y_i(n)$, then $\varepsilon_i(n)=0$ and the
right-hand side removes both excluded classes. If
$x_i(n)=y_i(n)$, then $\varepsilon_i(n)=1$ and the common class
is removed exactly once.

\subsection{CRT expansion and the exact formula}

We first record the elementary interval count used in the
expansion.

\begin{lemma}
\label{lem:L-count}
Let $m\geq-1$, let $q\geq1$, and let
$0\leq a\leq q-1$. Then
\begin{equation}
\label{eq:L-definition}
L(m,q,a)
:=
\#\left\{
z\in\Z
\mathrel{}\middle|\mathrel{}
0\leq z\leq m,\ 
\delta_q(z)=a
\right\}
=
1+\left\lfloor\frac{m-a}{q}\right\rfloor.
\end{equation}
\end{lemma}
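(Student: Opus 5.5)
The plan is to parametrize the integers in the residue class $a$ modulo $q$ explicitly and count the admissible parameters. Every $z\in\Z$ with $\delta_q(z)=a$ can be written uniquely as
\[
z=a+qt,\qquad t\in\Z.
\]
This follows from \eqref{eq:delta-divisibility}: $\delta_q(z)=a=\delta_q(a)$ holds if and only if $q\mid z-a$. The map $t\mapsto a+qt$ is therefore a bijection from $\Z$ onto the residue class, and $L(m,q,a)$ equals the number of integers $t$ with $0\leq a+qt\leq m$.

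Next I would translate the two inequalities into conditions on $t$. The lower bound $a+qt\geq0$ is equivalent to $t\geq -a/q$. Since $0\leq a\leq q-1$, we have $-1<-a/q\leq0$, so the lower bound is equivalent to $t\geq0$. The upper bound $a+qt\leq m$ is equivalent to $t\leq (m-a)/q$, and since $t$ is an integer this is equivalent to
\[
t\leq\left\lfloor\frac{m-a}{q}\right\rfloor.
\]
Hence $L(m,q,a)$ counts the integers $t$ with $0\leq t\leq T$, where $T=\lfloor (m-a)/q\rfloor$.

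The only step needing care is checking that the formula $1+T$ remains correct when this range is empty. If $T\geq0$, the count is $T+1$, as claimed. If $T<0$, then $m<a$, and there are no admissible $t$, so the formula must return $0$; that is, I need $T=-1$ exactly. This is where the hypothesis $m\geq-1$ is used: $m-a\geq-1-(q-1)=-q$, and $m-a<0$. So $-1\leq (m-a)/q<0$, which gives $T=-1$ and $1+T=0$, as required.

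The degenerate case $q=1$ needs no separate treatment. Then $a=0$, and the formula gives $m+1$, which is $0$ when $m=-1$.
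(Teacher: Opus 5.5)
Your proof is correct and follows essentially the same route as the paper. Both arguments split into the nonempty case, where the admissible $z$ are $a, a+q, a+2q,\dots$ up to $m$, and the empty case $a>m$, where $m\geq-1$ forces $-1\leq (m-a)/q<0$ so the formula returns $0$; your explicit parametrization $z=a+qt$ simply spells out the count in the first case more carefully.
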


\begin{proof}
If $a\leq m$, the integers being counted are
\[
a,\ a+q,\ a+2q,\dots
\]
up to $m$, which gives the stated formula.

If $a>m$, then
\[
-1\leq\frac{m-a}{q}<0.
\]
Consequently,
\[
1+\left\lfloor\frac{m-a}{q}\right\rfloor=0.
\]
This includes $m=-1$, for which the interval
$\{z\in\Z:0\leq z\leq m\}$ is empty.
\end{proof}

Let
\[
\boldsymbol\alpha
=
(\alpha_1,\dots,\alpha_r)
\in\{0,1,2\}^{r}
\]
and define its active index set by
\begin{equation}
\label{eq:active-indices}
I_{\boldsymbol\alpha}
=
\left\{
i\in\{1,\dots,r\}
\mathrel{}\middle|\mathrel{}
\alpha_i\neq0
\right\}.
\end{equation}

For every $i$, define the local weights
\begin{equation}
\label{eq:local-weights}
w_{i,0}(n)=1,
\qquad
w_{i,1}(n)=-(1-\varepsilon_i(n)),
\qquad
w_{i,2}(n)=-1,
\end{equation}
and put
\begin{equation}
\label{eq:global-weight}
W_{\boldsymbol\alpha}(n)
=
\prod_{i=1}^{r}w_{i,\alpha_i}(n).
\end{equation}

The modulus associated with $\boldsymbol\alpha$ is
\begin{equation}
\label{eq:Q-alpha}
Q_{\boldsymbol\alpha}
=
\prod_{i\in I_{\boldsymbol\alpha}}p_i.
\end{equation}
When $I_{\boldsymbol\alpha}=\varnothing$, the product is
understood to be $1$.

For $\ell\in\{1,2\}$, set
\begin{equation}
\label{eq:s-definition}
s_{i,1}(n)=x_i(n),
\qquad
s_{i,2}(n)=y_i(n).
\end{equation}

If $I_{\boldsymbol\alpha}=\varnothing$, define
\[
\eta_{\boldsymbol\alpha}(n)=0.
\]
If $I_{\boldsymbol\alpha}\neq\varnothing$, the Chinese remainder
theorem gives a unique
\[
\eta_{\boldsymbol\alpha}(n)
\in
\{0,\dots,Q_{\boldsymbol\alpha}-1\}
\]
satisfying
\begin{equation}
\label{eq:eta-crt}
\delta_{p_i}\bigl(\eta_{\boldsymbol\alpha}(n)\bigr)
=
s_{i,\alpha_i}(n),
\qquad
i\in I_{\boldsymbol\alpha}.
\end{equation}

For completeness, this canonical representative can be written
explicitly. For every $i\in I_{\boldsymbol\alpha}$, let
\[
Q_{\boldsymbol\alpha,i}
=
\frac{Q_{\boldsymbol\alpha}}{p_i}
\]
and let
\[
u_{\boldsymbol\alpha,i}
\in\{0,\dots,p_i-1\}
\]
be the unique canonical residue satisfying
\[
\delta_{p_i}
\left(
Q_{\boldsymbol\alpha,i}
u_{\boldsymbol\alpha,i}
\right)
=
1.
\]
Then
\begin{equation}
\label{eq:eta-explicit}
\eta_{\boldsymbol\alpha}(n)
=
\delta_{Q_{\boldsymbol\alpha}}
\left(
\sum_{i\in I_{\boldsymbol\alpha}}
s_{i,\alpha_i}(n)
Q_{\boldsymbol\alpha,i}
u_{\boldsymbol\alpha,i}
\right).
\end{equation}

Define
\begin{equation}
\label{eq:C-definition}
\Ccal_P(n)
=
\#\left\{
z\in\Zcal(n)
\mathrel{}\middle|\mathrel{}
\gcd\bigl(H_z(n)K_z(n),P\bigr)=1
\right\}
\end{equation}
and
\begin{equation}
\label{eq:D-definition}
D_P(n)
=
\one{\gcd(n,6P)=1}.
\end{equation}

\begin{theorem}[Exact CRT formula]
\label{thm:exact-formula}
For every positive integer $n$,
\begin{equation}
\label{eq:C-exact}
\Ccal_P(n)
=
\sum_{\boldsymbol\alpha\in\{0,1,2\}^{r}}
W_{\boldsymbol\alpha}(n)
L\left(
m(n),
Q_{\boldsymbol\alpha},
\eta_{\boldsymbol\alpha}(n)
\right).
\end{equation}
Moreover,
\begin{equation}
\label{eq:g-exact}
g_P(2n)
=
\begin{cases}
\Ccal_P(n),
&
\delta_3(n)=0,
\\[5pt]
\dfrac{\Ccal_P(n)+D_P(n)}{2},
&
\delta_3(n)\in\{1,2\}.
\end{cases}
\end{equation}
\end{theorem}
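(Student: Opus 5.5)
We prove the two assertions separately. For \eqref{eq:C-exact}, we write $\Ccal_P(n)$ as a sum over $z\in\Zcal(n)$ of the product of the local admissibility indicators. Since $P$ is squarefree, $\gcd(H_z(n)K_z(n),P)=1$ exactly when no $p_i$ divides $H_z(n)K_z(n)$. By \eqref{eq:local-exclusion} this is the product over $i$ of $\one{\delta_{p_i}(z)\notin\{x_i(n),y_i(n)\}}$. We replace each factor by the three-term expression \eqref{eq:local-indicator}. In the notation \eqref{eq:local-weights} and \eqref{eq:s-definition}, the $i$-th factor equals $\sum_{\alpha_i\in\{0,1,2\}} w_{i,\alpha_i}(n)\,\chi_{i,\alpha_i}(z)$, where $\chi_{i,0}\equiv1$ and $\chi_{i,\ell}(z)=\one{\delta_{p_i}(z)=s_{i,\ell}(n)}$ for $\ell=1,2$. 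Expanding the product over $i$ gives a sum over $\boldsymbol\alpha\in\{0,1,2\}^r$ with coefficient $W_{\boldsymbol\alpha}(n)$ times $\prod_{i\in I_{\boldsymbol\alpha}}\chi_{i,\alpha_i}(z)$. The collision case needs no separate handling, because \eqref{eq:local-indicator} is already valid in both cases by Lemma~\ref{lem:collision}.

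Next we interchange the finite sums over $z$ and over $\boldsymbol\alpha$. For fixed $\boldsymbol\alpha$, the primes $p_i$ with $i\in I_{\boldsymbol\alpha}$ are distinct, so by the Chinese remainder theorem and \eqref{eq:eta-crt} the conjunction of the conditions $\delta_{p_i}(z)=s_{i,\alpha_i}(n)$ is equivalent to $\delta_{Q_{\boldsymbol\alpha}}(z)=\eta_{\boldsymbol\alpha}(n)$. To justify this we apply \eqref{eq:delta-divisibility}: $z-\eta_{\boldsymbol\alpha}(n)$ is divisible by every such $p_i$, hence by their product. When $I_{\boldsymbol\alpha}=\varnothing$, we use the convention $Q_{\boldsymbol\alpha}=1$, $\eta_{\boldsymbol\alpha}(n)=0$, and $\delta_1\equiv0$. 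Counting $z\in\Zcal(n)$ in this class is then exactly $L(m(n),Q_{\boldsymbol\alpha},\eta_{\boldsymbol\alpha}(n))$ by Lemma~\ref{lem:L-count}. The hypotheses $m(n)\ge-1$ and $0\le\eta_{\boldsymbol\alpha}(n)\le Q_{\boldsymbol\alpha}-1$ hold, which gives \eqref{eq:C-exact}.

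For \eqref{eq:g-exact} we use Proposition~\ref{prop:six-wheel}. A representation counted by $g_P(2n)$ has both summands coprime to $6$ and to $P$, and coprimality to $P$ of both summands is equivalent to $\gcd(HK,P)=1$. If $\delta_3(n)=0$, the map $z\mapsto\{H_z,K_z\}$ is a bijection, so $g_P(2n)=\Ccal_P(n)$. If $\delta_3(n)\in\{1,2\}$, the set counted by $\Ccal_P(n)$ is $\tau_n$-invariant, since $\tau_n$ swaps $H$ and $K$. Each representation with $h<k$ therefore corresponds to a $\tau_n$-orbit of size $2$, and the representation $n+n$, if it occurs, corresponds to a fixed point. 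Hence $\Ccal_P(n)=2g_P(2n)-F$, where $F$ is the number of admissible fixed points.

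The main point to verify is that $F=D_P(n)$. A fixed point exists exactly when $m(n)$ is even, and it then gives $H=K=n$; it is admissible exactly when $\gcd(n,P)=1$. We show that $m(n)$ is even exactly when $n$ is odd. For $e(n)=1$ we have $n=3m+1$, and for $e(n)=2$ we have $n=3m+5$, so in both cases $n\equiv m+1\pmod 2$. Since $3\nmid n$ in this case, $F=\one{\gcd(n,6P)=1}=D_P(n)$. This yields $g_P(2n)=(\Ccal_P(n)+D_P(n))/2$.
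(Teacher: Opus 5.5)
Your proposal is correct and follows the paper's proof essentially step by step: you write $\Ccal_P(n)$ as a sum of indicator products, expand each factor via \eqref{eq:local-indicator}, reduce to a single class modulo $Q_{\boldsymbol\alpha}$ by the Chinese remainder theorem and count it with Lemma~\ref{lem:L-count}, and then count $\tau_n$-orbits, using the parity relation $n\equiv m(n)+1 \pmod 2$ to get $F=D_P(n)$. The only cosmetic difference is that you count orbits of sizes $1$ and $2$ directly, obtaining $\Ccal_P(n)=2g_P(2n)-F$, where the paper cites Burnside's lemma.
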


\begin{proof}
For $z\in\Zcal(n)$, define
\[
I_i(z)
=
\one{
\delta_{p_i}(z)\notin\{x_i(n),y_i(n)\}
}.
\]
By \eqref{eq:local-exclusion},
\[
\prod_{i=1}^{r}I_i(z)=1
\]
exactly when
\[
\gcd\bigl(H_z(n)K_z(n),P\bigr)=1.
\]
Therefore,
\begin{equation}
\label{eq:C-indicator-sum}
\Ccal_P(n)
=
\sum_{z\in\Zcal(n)}
\prod_{i=1}^{r}I_i(z).
\end{equation}

Expanding each factor by means of
\eqref{eq:local-indicator} gives
\[
\prod_{i=1}^{r}I_i(z)
=
\sum_{\boldsymbol\alpha\in\{0,1,2\}^{r}}
W_{\boldsymbol\alpha}(n)
\one{
\delta_{Q_{\boldsymbol\alpha}}(z)
=
\eta_{\boldsymbol\alpha}(n)
}.
\]
For $\boldsymbol\alpha=\boldsymbol0$, this indicator is
identically $1$ because
\[
Q_{\boldsymbol0}=1,
\qquad
\eta_{\boldsymbol0}(n)=0,
\qquad
\delta_1(z)=0.
\]

Summing the expansion over $z\in\Zcal(n)$ and applying
Lemma~\ref{lem:L-count} gives \eqref{eq:C-exact}.

Suppose that $\delta_3(n)=0$. By
Proposition~\ref{prop:six-wheel}, every admissible parameter
corresponds to exactly one unordered representation. Hence
\[
g_P(2n)=\Ccal_P(n).
\]

Suppose now that $\delta_3(n)\in\{1,2\}$. The involution
\[
z\longmapsto m(n)-z
\]
preserves admissibility because it interchanges $H_z(n)$ and
$K_z(n)$. Burnside's lemma gives
\[
g_P(2n)
=
\frac{\Ccal_P(n)+F_P(n)}{2},
\]
where $F_P(n)$ is the number of admissible fixed parameters.

A fixed parameter exists exactly when $m(n)$ is even. In the two
cases under consideration,
\[
n=3m(n)+1
\qquad\text{or}\qquad
n=3m(n)+5.
\]
Since both constant terms are odd, $m(n)$ is even exactly when
$n$ is odd. If $\gcd(n,6P)=1$, then $n$ is odd, so the fixed
parameter
\[
z=\frac{m(n)}2
\]
exists and gives $H_z(n)=K_z(n)=n$. It is admissible. Conversely,
every admissible fixed parameter gives the decomposition
$2n=n+n$ and therefore requires $\gcd(n,6P)=1$. Hence
\[
F_P(n)=D_P(n),
\]
which proves \eqref{eq:g-exact}.
\end{proof}

\begin{corollary}
\label{cor:r-one}
If $r=1$ and $P=p$, then
\begin{equation}
\label{eq:r-one-C}
\Ccal_p(n)
=
m(n)+1
-
\bigl(1-\varepsilon_1(n)\bigr)
L\bigl(m(n),p,x_1(n)\bigr)
-
L\bigl(m(n),p,y_1(n)\bigr).
\end{equation}
Together with \eqref{eq:g-exact}, this gives an equivalent
one-prime formulation of the result established in
\cite{Salazar2026}.
\end{corollary}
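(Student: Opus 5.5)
The plan is to specialize Theorem~\ref{thm:exact-formula} to $r=1$ and evaluate its three terms one by one. With a single prime $p=p_1$, the index set $\{0,1,2\}^{r}$ collapses to the three values $\alpha_1\in\{0,1,2\}$. I will go through these cases in turn and read off $W_{\boldsymbol\alpha}(n)$, $Q_{\boldsymbol\alpha}$ and $\eta_{\boldsymbol\alpha}(n)$ from \eqref{eq:local-weights}, \eqref{eq:Q-alpha} and \eqref{eq:eta-crt}.

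\textbf{Case $\alpha_1=0$.} Here $W=1$, $Q=1$ and $\eta=0$. Lemma~\ref{lem:L-count} then gives
\[
L(m(n),1,0)=1+m(n),
\]
and this holds also for $m(n)=-1$, where it correctly yields $0$.

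\textbf{Cases $\alpha_1=1$ and $\alpha_1=2$.} In both cases the active set is $\{1\}$, so $Q=p$. The CRT condition \eqref{eq:eta-crt} involves a single congruence and is therefore trivial: $\eta$ is the canonical residue $s_{1,\alpha_1}(n)$. This equals $x_1(n)$ for $\alpha_1=1$ and $y_1(n)$ for $\alpha_1=2$. Both already lie in $\{0,\dots,p-1\}$: $x_1(n)$ is $a_1$ or $b_1$, and $y_1(n)$ is a $\delta_p$-value. The corresponding weights are $-(1-\varepsilon_1(n))$ and $-1$.

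Substituting the three terms into \eqref{eq:C-exact} yields \eqref{eq:r-one-C} directly. As a consistency check, the formula can equally be derived from \eqref{eq:C-indicator-sum} and \eqref{eq:local-indicator} by summing the single local indicator over $\Zcal(n)$.

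For the second claim, I will combine \eqref{eq:r-one-C} with \eqref{eq:g-exact}, in which $D_p(n)=\one{\gcd(n,6p)=1}$. This gives a closed formula for $g_p(2n)$ in terms of $m(n)$, $\varepsilon_1(n)$, $x_1(n)$ and $y_1(n)$. The remaining task is to match these quantities with the one-prime formula of \cite{Salazar2026}. The parametrization there has the same six-wheel form, and its excluded residues are $a(p)$, $b(p)$ and their reflections through $m(n)$. So the identification should amount to a change of notation, including checking that the collision case is handled by the factor $1-\varepsilon_1(n)$ as in the collision lemma.

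The computation itself is routine. The main obstacle is the comparison with \cite{Salazar2026}, since its exact notation and case split are not reproduced here. I would first line up the three cases $e(n)\in\{0,1,2\}$ with the cases in that paper. I would then verify that the floor terms coincide by using $L(m,p,a)=1+\lfloor (m-a)/p\rfloor$.
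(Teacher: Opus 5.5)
Your proposal is correct and follows the paper's proof. You specialize \eqref{eq:C-exact} to $r=1$, read off the weights $1$, $-(1-\varepsilon_1(n))$, $-1$ for $\alpha_1=0,1,2$, identify $\eta$ with the already canonical residues $x_1(n)$ and $y_1(n)$, and use $L(m(n),1,0)=m(n)+1$, which stays valid in the degenerate case $m(2)=-1$; the paper's proof likewise does not carry out the comparison with the earlier one-prime formula beyond this specialization, so the step you flag as the main obstacle is treated there as a change of notation rather than proved.
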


\begin{proof}
When $r=1$, the three possible values of $\alpha_1$ are
$0$, $1$, and $2$, with respective weights
\[
1,
\qquad
-(1-\varepsilon_1(n)),
\qquad
-1.
\]
The term corresponding to $\alpha_1=0$ is
\[
L(m(n),1,0)=m(n)+1.
\]
For $n=2$, both sides vanish because $m(2)=-1$.
\end{proof}

\begin{remark}
For fixed $P$, the inverses in \eqref{eq:eta-explicit} may be
precomputed. Formula \eqref{eq:C-exact} then contains at most
$3^r$ floor terms, and the number of arithmetic operations does
not grow with $n$. If $r$ is part of the input, the formula has
exponential dependence on $r$.
\end{remark}

\section{Finite local correlations and affine periodicity}

The exact formula admits a complementary interpretation at the
level of the complete modulus $M=6P$. Instead of counting
parameters in the truncated interval $\Zcal(n)$, we first count
the canonical residue classes $c$ for which both $c$ and its
reflection $c\mapsto\delta_M(2n-c)$ about $n$ are units modulo
$M$. This finite correlation determines both the local density
and the affine increment of $g_P(2n)$.

Define
\begin{equation}
\label{eq:R-definition}
\Rcal_P(n)
=
\left\{
c\in\{0,\dots,M-1\}
\mathrel{}\middle|\mathrel{}
\gcd(c,M)=1,\
\gcd\bigl(\delta_M(2n-c),M\bigr)=1
\right\}.
\end{equation}

The reflection map is
\begin{equation}
\label{eq:reflection}
\rho_n(c)
=
\delta_M(2n-c).
\end{equation}
If $c\in\Rcal_P(n)$, then $\rho_n(c)\in\Rcal_P(n)$ and
\[
\delta_M\bigl(2n-\rho_n(c)\bigr)=c.
\]
Consequently,
\[
\rho_n\bigl(\rho_n(c)\bigr)=c,
\]
so $\rho_n$ is an involution of $\Rcal_P(n)$.

Define
\begin{equation}
\label{eq:A-definition}
A_P(n)
=
\prod_{i=1}^{r}
\left(p_i-2+\varepsilon_i(n)\right)
\end{equation}
and
\begin{equation}
\label{eq:kappa-definition}
\kappa_P(n)
=
\left(1+\one{\delta_3(n)=0}\right)A_P(n).
\end{equation}
Equivalently,
\[
\kappa_P(n)
=
\left(1+\one{\delta_3(n)=0}\right)
\prod_{i=1}^{r}
\left(p_i-2+\varepsilon_i(n)\right).
\]

\begin{proposition}
\label{prop:R-cardinality}
For every positive integer $n$,
\begin{equation}
\label{eq:R-cardinality}
\#\Rcal_P(n)=\kappa_P(n).
\end{equation}
\end{proposition}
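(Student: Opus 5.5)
We prove \eqref{eq:R-cardinality} by CRT factorization of the condition defining $\Rcal_P(n)$. Since $M=2\cdot3\cdot p_1\cdots p_r$ is squarefree, the Chinese remainder theorem identifies $\{0,\dots,M-1\}$ with $\prod_{q\mid M}\{0,\dots,q-1\}$ via $c\mapsto(\delta_q(c))_{q\mid M}$. Moreover, for each prime $q\mid M$ we have $\delta_q(\delta_M(2n-c))=\delta_q(2n-c)$ by \eqref{eq:delta-divisibility}. The condition $\gcd(c,M)=\gcd(\delta_M(2n-c),M)=1$ is therefore equivalent to the conjunction, over the primes $q\mid M$, of the local conditions $q\nmid c$ and $q\nmid 2n-c$. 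Each of these depends only on $\delta_q(c)$. Consequently
\[
\#\Rcal_P(n)=\prod_{q\mid M}N_q(n),
\qquad
N_q(n)=\#\{t\in\{0,\dots,q-1\}: q\nmid t,\ q\nmid 2n-t\}.
\]

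It remains to compute each local factor.

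For $q=2$, the residue $t$ must be $1$, and then $2n-1$ is odd. Hence $N_2(n)=1$.

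For $q=3$, the residue $t$ ranges over $\{1,2\}$, and we must exclude $t\equiv 2n\pmod 3$. If $3\mid n$, then $2n\equiv0$, nothing is excluded, and $N_3(n)=2$. Otherwise $\delta_3(2n)\in\{1,2\}$ excludes exactly one residue, so $N_3(n)=1$. In both cases $N_3(n)=1+\one{\delta_3(n)=0}$.

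For $q=p_i\geq5$, we remove from the $p_i$ residues the class $0$ and the class $\delta_{p_i}(2n)$. These two classes coincide exactly when $p_i\mid 2n$, that is, when $\delta_{p_i}(n)=0$, since $p_i$ is odd. Hence $N_{p_i}(n)=p_i-2+\varepsilon_i(n)$.

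Multiplying the local factors gives $1\cdot(1+\one{\delta_3(n)=0})\cdot A_P(n)=\kappa_P(n)$, as claimed.

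The only step requiring care is the first one. We must justify that membership in $\Rcal_P(n)$ really factors prime by prime, which means checking that the reduction $\delta_M$ commutes with reduction modulo each prime divisor. This follows directly from \eqref{eq:delta-divisibility}, because $q\mid M$ and $M\mid (2n-c)-\delta_M(2n-c)$. The local collision for $p_i$ mirrors the collision lemma, Lemma~\ref{lem:collision}, but here it is immediate and does not need to be invoked.
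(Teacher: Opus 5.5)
Your proof is correct and follows essentially the same route as the paper: for each prime $q\mid M$ you identify the excluded local residues $0$ and $\delta_q(2n)$, obtain the local counts $1$, $1+\one{\delta_3(n)=0}$, and $p_i-2+\varepsilon_i(n)$, and multiply them via the Chinese remainder theorem. Your explicit check that $\delta_q(\delta_M(2n-c))=\delta_q(2n-c)$ is a small detail that the paper leaves implicit.
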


\begin{proof}
Let $q$ be a prime divisor of $M$. The condition
\[
\gcd(c,M)=1
\]
excludes the canonical residue
\[
\delta_q(c)=0.
\]
The second coprimality condition excludes
\[
\delta_q(2n-c)=0,
\]
which, by \eqref{eq:delta-divisibility}, is equivalent to
\[
\delta_q(c)=\delta_q(2n).
\]
Thus the two locally excluded residues of $c$ are
\[
0
\qquad\text{and}\qquad
\delta_q(2n).
\]

They coincide exactly when $\delta_q(2n)=0$. Hence the number of
allowed canonical residues under $\delta_q$ is
\[
q-1
\quad\text{if}\quad
\delta_q(2n)=0,
\]
and
\[
q-2
\quad\text{otherwise}.
\]

For $q=2$, the two excluded residues always coincide, giving
one allowed residue.

For $q=3$, they coincide exactly when $\delta_3(n)=0$, because
multiplication by $2$ permutes the nonzero canonical residues
under $\delta_3$. The corresponding local factor is therefore
\[
1+\one{\delta_3(n)=0}.
\]

For $q=p_i$, they coincide exactly when
$\delta_{p_i}(n)=0$. The corresponding local factor is
\[
p_i-2+\varepsilon_i(n).
\]

Since $M$ is squarefree, the Chinese remainder theorem puts the
global admissible classes in bijection with the Cartesian product
of the local admissible classes. Multiplying the local factors gives
\[
\#\Rcal_P(n)
=
\left(1+\one{\delta_3(n)=0}\right)
\prod_{i=1}^{r}
\left(p_i-2+\varepsilon_i(n)\right),
\]
which is \eqref{eq:R-cardinality}.
\end{proof}

For every prime $q\mid M$, define
\begin{equation}
\label{eq:nu-definition}
\nu_q(2n)
=
2-\one{\delta_q(2n)=0}.
\end{equation}
Thus $\nu_q(2n)$ is exactly the number of distinct local
residues excluded by the two coprimality conditions.

Define the finite singular factor
\begin{equation}
\label{eq:finite-singular-factor}
\mathfrak S_M^{\mathrm{fin}}(2n)
=
\prod_{q\mid M}
\frac{1-\nu_q(2n)/q}{(1-1/q)^2}.
\end{equation}

\begin{corollary}
\label{cor:singular-factor}
One has the exact identity
\begin{equation}
\label{eq:kappa-singular}
\frac{\kappa_P(n)}{M}
=
\left(\frac{\varphi(M)}{M}\right)^2
\mathfrak S_M^{\mathrm{fin}}(2n).
\end{equation}
Equivalently, since $M=6P$,
\[
\frac{\kappa_P(n)}{6P}
=
\left(\frac{\varphi(6P)}{6P}\right)^2
\mathfrak S_{6P}^{\mathrm{fin}}(2n).
\]
\end{corollary}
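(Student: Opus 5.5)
The identity will follow by rewriting the right-hand side of \eqref{eq:kappa-singular} as a product over the primes $q\mid M$ and comparing factor by factor with Proposition~\ref{prop:R-cardinality}. Since $M$ is squarefree, I will use the factorizations
\[
\frac{\varphi(M)}{M}=\prod_{q\mid M}\left(1-\frac1q\right),
\qquad
\frac{1}{M}=\prod_{q\mid M}\frac1q .
\]

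First, substituting into \eqref{eq:finite-singular-factor}, the factors $(1-1/q)^2$ cancel against the square of $\varphi(M)/M$. This gives
\[
\left(\frac{\varphi(M)}{M}\right)^2\mathfrak S_M^{\mathrm{fin}}(2n)
=\prod_{q\mid M}\left(1-\frac{\nu_q(2n)}{q}\right)
=\frac{1}{M}\prod_{q\mid M}\bigl(q-\nu_q(2n)\bigr).
\]
The cancellation is legitimate because $q\geq2$, so $1-1/q\neq0$.

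Second, I will show that $\prod_{q\mid M}(q-\nu_q(2n))=\kappa_P(n)$ by evaluating each local factor using \eqref{eq:nu-definition}. The proof of Proposition~\ref{prop:R-cardinality} already gives the needed collision facts.
\begin{itemize}
\item For $q=2$, one always has $\delta_2(2n)=0$, so $\nu_2=1$ and the factor is $2-1=1$.
\item For $q=3$, one has $\delta_3(2n)=0$ if and only if $\delta_3(n)=0$, because $2$ is invertible modulo $3$. The factor is therefore $3-2+\one{\delta_3(n)=0}$, which equals $1+\one{\delta_3(n)=0}$.
\item For $q=p_i$, the same invertibility of $2$ gives the factor $p_i-2+\varepsilon_i(n)$.
\end{itemize}
Multiplying these factors reproduces \eqref{eq:kappa-definition}. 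Dividing by $M$ then yields \eqref{eq:kappa-singular}, and the second displayed form is the same identity with $M=6P$ substituted.

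There is no real obstacle here. The only point needing care is the bookkeeping at $q=2$ and $q=3$: the factor $(1-1/2)^2$ must combine correctly with the trivial local count at $q=2$, and the factor at $3$ must be checked to match $1+\one{\delta_3(n)=0}$ rather than the generic form $q-2+\one{\cdot}$. Writing both sides as products of per-prime local counts divided by $q$ handles both uniformly.
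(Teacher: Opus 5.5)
Your proof is correct and follows essentially the same route as the paper. Both cancel the factors $(1-1/q)^2$ to reduce the right-hand side to $\prod_{q\mid M}(1-\nu_q(2n)/q)$, and both match this against the per-prime local counts $1$, $1+\one{\delta_3(n)=0}$, and $p_i-2+\varepsilon_i(n)$ from Proposition~\ref{prop:R-cardinality}. The only cosmetic difference is that you check $\prod_{q\mid M}(q-\nu_q(2n))=\kappa_P(n)$ directly from the definitions, whereas the paper cites the cardinality $\#\Rcal_P(n)$.
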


\begin{proof}
Proposition~\ref{prop:R-cardinality} gives
\[
\frac{\kappa_P(n)}{M}
=
\prod_{q\mid M}
\left(1-\frac{\nu_q(2n)}{q}\right).
\]
On the other hand,
\[
\frac{\varphi(M)}{M}
=
\prod_{q\mid M}\left(1-\frac1q\right).
\]
Factoring
\[
\left(1-\frac1q\right)^2
\]
from every local factor gives
\eqref{eq:kappa-singular}.
\end{proof}

\begin{remark}
Identity \eqref{eq:kappa-singular} is exact and finite. It does
not assert an asymptotic formula for Goldbach representations.
It identifies $\kappa_P(n)/M$ as the finite local factor
associated with the modulus $M$. The product
$\mathfrak S_M^{\mathrm{fin}}(2n)$ is the truncated analogue of
the local correction factors appearing in the
Hardy--Littlewood framework
\cite{HardyLittlewood1923,FriedlanderIwaniec2010}.
\end{remark}

\subsection{Affine periodicity}

\begin{theorem}[Affine periodicity]
\label{thm:affine-periodicity}
For every positive integer $n$,
\begin{equation}
\label{eq:affine-periodicity}
g_P\bigl(2(n+M)\bigr)-g_P(2n)
=
\kappa_P(n).
\end{equation}
Equivalently,
\[
g_P\bigl(2(n+6P)\bigr)-g_P(2n)
=
\kappa_P(n).
\]
\end{theorem}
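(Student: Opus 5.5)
Since $n$ and $n+M$ have the same residues modulo every prime dividing $M$, we have $\kappa_P(n+M)=\kappa_P(n)$, $D_P(n+M)=D_P(n)$, and $\delta_3(n+M)=\delta_3(n)$. The plan is to work with the short-interval form \eqref{eq:g-short-interval} instead of the parametrized count. Write
\[
g_P(2n)=\#\{h:\ 1\le h\le n,\ \gcd(h,M)=\gcd(2n-h,M)=1\},
\]
and let $N=n+M$. The admissibility condition for $h$ at level $2N$ is $\gcd(h,M)=\gcd(2N-h,M)=1$. Since $2N-h\equiv 2n-h\pmod M$, this depends only on $\delta_M(h)$, and $h$ is admissible exactly when $\delta_M(h)\in\Rcal_P(n)$. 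The same residue condition governs admissibility at level $2n$. Hence $g_P(2N)-g_P(2n)$ counts the $h$ in the window $n<h\le n+M$ with $\delta_M(h)\in\Rcal_P(n)$.

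This window is a complete residue system modulo $M$, since it consists of $M$ consecutive integers. It therefore contains exactly $\#\Rcal_P(n)$ admissible $h$. By Proposition~\ref{prop:R-cardinality}, this number equals $\kappa_P(n)$, which proves \eqref{eq:affine-periodicity}.

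The one point needing care is the equivalence of \eqref{eq:g-short-interval} with the definition \eqref{eq:g-definition} at both levels. In particular, the boundary case $h=k=n$ must be counted correctly. It is included once via $h\le n$, and nothing else in the argument requires attention. An alternative route would difference the exact CRT formula of Theorem~\ref{thm:exact-formula}: shift $m(n+M)=m(n)+2P$, use $L(m+2P,Q,a)=L(m,Q,a)+2P/Q$, and then verify that $\sum_{\boldsymbol\alpha}W_{\boldsymbol\alpha}(n)\,2P/Q_{\boldsymbol\alpha}=2A_P(n)$ and that the Burnside halving yields $\kappa_P(n)$. That route is more laborious, so I will use the direct residue-counting argument.
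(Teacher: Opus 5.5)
Your argument is correct, and it takes a different route from the paper's proof.

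\textbf{The paper's route.} The paper stays in the $6$-wheel parameter $z$. It uses $m(n+M)=m(n)+2P$, notes that the excluded classes $x_i(n),y_i(n)$ and the indicators $\varepsilon_i(n)$ are unchanged, and counts the $2P$ new parameters as two complete blocks modulo $P$, each containing $A_P(n)$ admissible classes. This gives $\Ccal_P(n+M)-\Ccal_P(n)=2A_P(n)$. It then has to split on $\delta_3(n)$. When $\delta_3(n)=0$ it uses $g_P=\Ccal_P$ and $\kappa_P=2A_P$. Otherwise it uses the Burnside halving $g_P=(\Ccal_P+D_P)/2$, together with $D_P(n+M)=D_P(n)$ and $\kappa_P=A_P$.

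\textbf{Your route.} You work with the summand $h$ and the full modulus $M$. Admissibility at level $2(n+M)$ and at level $2n$ is the same condition $\delta_M(h)\in\Rcal_P(n)$; in the paper's later notation, $\widetilde{\Rcal}_P(n+M)=\widetilde{\Rcal}_P(n)$. This reduces the increment to a count over one complete residue system, and Proposition~\ref{prop:R-cardinality} finishes it.

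\textbf{What each approach buys.} Your argument avoids the parametrization, the involution and its fixed point, and the case distinction on $\delta_3(n)$. It also produces $\kappa_P(n)$ directly, instead of producing $2A_P(n)$ or $A_P(n)$ and then matching that with $\kappa_P(n)$. It is essentially the periodic-lift viewpoint that the paper introduces only later, in \eqref{eq:g-lift-cardinality} and Proposition~\ref{prop:block-bound}. The paper's route yields in addition the identity $\Ccal_P(n+M)-\Ccal_P(n)=2A_P(n)$ for the parameter count. That identity connects the increment to the exact CRT formula and to the density term $\frac{m(n)+1}{P}A_P(n)$ of Proposition~\ref{prop:C-density}. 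Your sketched alternative via $L(m+2P,Q,a)=L(m,Q,a)+2P/Q$, which is valid because $Q_{\boldsymbol\alpha}\mid P$, would recover that identity as well.
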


\begin{proof}
Since $M=6P$, replacing $n$ by $n+M$ leaves all relevant
canonical residues unchanged:
\[
\delta_3(n+M)=\delta_3(n)
\]
and
\[
\delta_{p_i}(n+M)=\delta_{p_i}(n)
\qquad
(1\leq i\leq r).
\]
It also gives
\begin{equation}
\label{eq:m-period-increment}
m(n+M)=m(n)+2P.
\end{equation}

The local excluded classes $x_i(n)$ and $y_i(n)$ remain
unchanged. Indeed, they depend only on $a_i$, $b_i$, and the
canonical residues of $m(n)$ under the primes $p_i$, while
\[
2P
\]
is divisible by every $p_i$.

In one complete block of $P$ consecutive parameters $z$, the
number of admissible residue classes is
\[
A_P(n)
=
\prod_{i=1}^{r}
\left(p_i-2+\varepsilon_i(n)\right).
\]
This follows from the Chinese remainder theorem, since $p_i$
excludes two residue classes when $\varepsilon_i(n)=0$ and one
when $\varepsilon_i(n)=1$.

By \eqref{eq:m-period-increment}, the parameter interval for
$n+M$ contains exactly $2P$ additional consecutive integers.
It therefore contains two additional complete blocks under the
modulus $P$. Hence
\begin{equation}
\label{eq:C-period-increment}
\Ccal_P(n+M)-\Ccal_P(n)
=
2A_P(n).
\end{equation}

If $\delta_3(n)=0$, then
\[
g_P(2n)=\Ccal_P(n)
\qquad\text{and}\qquad
\kappa_P(n)=2A_P(n).
\]
Equation \eqref{eq:C-period-increment} gives the desired
increment.

If $\delta_3(n)\in\{1,2\}$, then
\[
g_P(2n)
=
\frac{\Ccal_P(n)+D_P(n)}{2}.
\]
Moreover,
\[
D_P(n+M)=D_P(n),
\]
because
\[
\gcd(n+M,M)=\gcd(n,M).
\]
It follows that
\[
g_P\bigl(2(n+M)\bigr)-g_P(2n)
=
\frac{2A_P(n)}{2}
=
A_P(n)
=
\kappa_P(n).
\]

The argument also includes $n=2$. In that case,
$\Zcal(2)=\varnothing$, while the interval associated with
$n+M$ consists of exactly $2P$ parameters.
\end{proof}

\begin{corollary}
\label{cor:quasipolynomial}
Write uniquely
\[
n=s+tM,
\qquad
1\leq s\leq M,
\qquad
t\in\Z_{\geq0}.
\]
Then
\begin{equation}
\label{eq:quasipolynomial}
g_P(2n)
=
g_P(2s)+t\kappa_P(s).
\end{equation}
Consequently, $M=6P$ is a quasiperiod of the degree-one
quasipolynomial $g_P(2n)$.
\end{corollary}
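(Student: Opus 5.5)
The plan is to iterate Theorem~\ref{thm:affine-periodicity} $t$ times, starting from $s$, and then check that $\kappa_P$ is constant along the progression $s,s+M,s+2M,\dots$.

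First, existence and uniqueness of the decomposition. Put $t=\lceil n/M\rceil-1$ and $s=n-tM$. Then $0<s\le M$. Uniqueness holds because two such decompositions would have values of $s$ that differ by a multiple of $M$ while both lie in the interval $[1,M]$, which has length less than $M$.

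Next, the invariance of $\kappa_P$ under $n\mapsto n+M$. By definition \eqref{eq:kappa-definition}, $\kappa_P(n)$ depends only on $\delta_3(n)$ and on the residues $\delta_{p_i}(n)$ through the indicators $\varepsilon_i(n)$. Since $3\mid M$ and $p_i\mid M$, property \eqref{eq:delta-divisibility} gives $\delta_3(n+M)=\delta_3(n)$ and $\delta_{p_i}(n+M)=\delta_{p_i}(n)$. Hence $\kappa_P(s+jM)=\kappa_P(s)$ for every $j\ge0$.

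Now induct on $t$. The case $t=0$ is trivial. For the inductive step, apply Theorem~\ref{thm:affine-periodicity} with $n$ replaced by $s+(t-1)M$, which is a positive integer:
\[
g_P\bigl(2(s+tM)\bigr)=g_P\bigl(2(s+(t-1)M)\bigr)+\kappa_P\bigl(s+(t-1)M\bigr)=g_P(2s)+(t-1)\kappa_P(s)+\kappa_P(s).
\]
This gives \eqref{eq:quasipolynomial}.

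For the quasipolynomial interpretation, fix a residue class of $n$ modulo $M$. Along that class $s$ is fixed, $t=(n-s)/M$, and
\[
g_P(2n)=g_P(2s)+\frac{\kappa_P(s)}{M}(n-s),
\]
which is affine in $n$ with constant coefficients. So $g_P(2n)$ is a quasipolynomial of quasiperiod $M$. Its degree is exactly one because the leading coefficient $\kappa_P(s)/M$ is never zero: every factor $p_i-2+\varepsilon_i\ge3$, and $1+\one{\cdot}\ge1$.

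None of these steps is a real obstacle. The only point needing care is that the increment in Theorem~\ref{thm:affine-periodicity} is evaluated at the shifted argument $s+(t-1)M$, not at $s$, and the residue invariance above settles that.
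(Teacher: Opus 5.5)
Your proof is correct and follows the paper's argument exactly: you iterate Theorem~\ref{thm:affine-periodicity} along $s, s+M, \dots, s+(t-1)M$ and use the invariance $\kappa_P(s+jM)=\kappa_P(s)$. You also spell out two points the paper leaves implicit: the existence and uniqueness of the decomposition, and the fact that $\kappa_P(s)\geq 3>0$, which is what justifies the phrase ``degree-one''.
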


\begin{proof}
Iterate Theorem~\ref{thm:affine-periodicity} $t$ times and use
\[
\kappa_P(s+jM)=\kappa_P(s)
\qquad
(j\in\Z_{\geq0}).
\]
\end{proof}

\subsection{The subsequence \texorpdfstring{$n=tP$}{n=tP}}

Define the nonprincipal real Dirichlet character modulo $3$ by
\begin{equation}
\label{eq:chi3-definition}
\chi_3(a)
=
\begin{cases}
0, & \delta_3(a)=0,\\
1, & \delta_3(a)=1,\\
-1, & \delta_3(a)=2.
\end{cases}
\end{equation}

\begin{theorem}
\label{thm:ray-formula}
For every positive integer $t$,
\begin{equation}
\label{eq:ray-formula}
g_P(2tP)
=
\frac{
\left(1+\one{\delta_3(t)=0}\right)t\varphi(P)
}{6}
+
\frac{\chi_3(t)}{3}
\prod_{i=1}^{r}
\bigl(\chi_3(p_i)-1\bigr).
\end{equation}
\end{theorem}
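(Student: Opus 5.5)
Since every $p_i$ divides $n=tP$, the collision lemma (Lemma~\ref{lem:collision}) applies to every prime, and I would count directly with the short-interval form rather than through the $3^r$-term formula. First I would remove the involution term. Because $r\geq1$ and $p_1\mid tP$, we have $D_P(tP)=0$. Hence, by Theorem~\ref{thm:exact-formula} together with Proposition~\ref{prop:six-wheel} (or directly by the symmetry $h\leftrightarrow 2tP-h$, which has no admissible fixed point), $g_P(2tP)=N/2$. Here $N$ is the number of $h\in\{0,\dots,2tP-1\}$ with $\gcd(h,6P)=\gcd(2tP-h,6P)=1$.

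Since $p_i\mid 2tP$, the condition $p_i\nmid 2tP-h$ is equivalent to $p_i\nmid h$. So the conditions split into two parts: $\gcd(h,P)=1$, and a condition on $\delta_6(h)$. The latter says that $h$ is odd and $\delta_3(h)\notin\{0,\delta_3(2tP)\}$. If $\delta_3(t)=0$, it allows exactly two residues mod $6$, namely $1$ and $5$. If $\delta_3(t)\neq0$, it allows exactly one residue $\rho_t$ mod $6$, characterized by $\delta_3(\rho_t)=\delta_3(-2tP)=\delta_3(tP)$.

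I would then apply M\"obius inversion on the $P$-condition:
\[
N=\sum_{d\mid P}\mu(d)\,\#\{\ell\in\Z:0\leq \ell<2tP/d,\ \delta_6(d\ell)\in S_t\},
\]
where $S_t$ is the allowed set mod $6$. Since $\gcd(d,6)=1$, the condition on $\ell$ is again a condition on $\delta_6(\ell)$ with the same number of allowed residues. On the interval $\{0,\dots,L-1\}$ with $L=2tP/d$, the count equals $L\,\#S_t/6$ plus a bounded periodic correction that depends only on $\delta_6(L)$ and on the allowed residue(s).

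The main term sums to
\[
\frac{2t\,\#S_t}{6}\sum_{d\mid P}\mu(d)\frac{P}{d}=\frac{\#S_t\,t\,\varphi(P)}{3}.
\]
After halving, this gives the first summand of \eqref{eq:ray-formula}, because $\#S_t=1+\one{\delta_3(t)=0}$.

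The step I expect to be the main obstacle is the correction term. If $\delta_3(t)=0$, then $6\mid L$ in the relevant sense modulo $3$, and the set $\{1,5\}$ is symmetric. I would verify that the correction vanishes here, matching $\chi_3(t)=0$. If $\delta_3(t)\neq0$, then $L\equiv 2t(P/d)\pmod 6$, and the allowed $\ell$-residue is determined by $\delta_3(\ell)=\delta_3(tP/d)$, up to a sign fixed by $\chi_3$. I would compute by hand the excess of the count over $L/6$ for the two possibilities of $\chi_3(tP/d)$, expecting a value of the form $\tfrac{2}{3}\chi_3(t)\chi_3(P/d)$ up to sign conventions. Since $\chi_3$ is completely multiplicative and $\chi_3(p)=\pm1$ for the primes involved, the sum $\sum_{d\mid P}\mu(d)\chi_3(P/d)$ factors as $\prod_i(\chi_3(p_i)-1)$. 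Halving this should give $\tfrac{\chi_3(t)}{3}\prod_i(\chi_3(p_i)-1)$.

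As a consistency check, I would verify the small case $r=1$, $t=1$ numerically, and also check the result against Theorem~\ref{thm:affine-periodicity} with $t\mapsto t+6$.
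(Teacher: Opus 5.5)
Your plan is correct, and it follows a somewhat different route from the paper's.

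\textbf{The paper's route.} It works directly on the half interval $1\leq h\leq tP$ of \eqref{eq:g-short-interval}. The case $\delta_3(t)=0$ is treated separately: there $2tP=uM$, so the two coprimality conditions coincide, and the count follows from the pairing $c\leftrightarrow M-c$ of reduced residues. M\"obius inversion is used only when $\delta_3(t)\neq0$. Because the intervals $[1,X_d]$ can have either parity, this produces the four-row table for $N(X)$. That table includes a parity term $\tfrac12\one{\delta_2(X)=1}$, which is removed only afterwards, using $\sum_{d\mid P}\mu(d)=0$ together with $\delta_2(X_d)=\delta_2(t)$.

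\textbf{Your route.} You symmetrize first. Since $p_1\mid tP$, the involution $h\mapsto 2tP-h$ has no admissible fixed point, so $g_P(2tP)$ is half the ordered count over a full interval of length $2tP$. After M\"obius inversion, every interval then has even length $L=2tP/d$. No parity term arises, and both cases fit one framework:
\begin{itemize}
\item If $3\mid t$, then $6\mid L$ holds literally, so the correction is exactly $0$. The symmetry of $\{1,5\}$ is not needed.
\item If $3\nmid t$, then $\delta_6(L)\in\{2,4\}$. A two-line check shows the count of admissible $\ell$ in $\{0,\dots,L-1\}$ is exactly $L/6+\tfrac23\chi_3(tP/d)$. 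So the correction you predicted holds with no sign ambiguity, and halving gives the stated character term.
\end{itemize}

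\textbf{Comparison.} Your approach gains uniformity and a smaller residue analysis: two residues of $L$ instead of four of $X$. The paper's approach needs no preliminary doubling or fixed-point argument and uses the short-interval form directly.
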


\begin{proof}
Put $n=tP$. Formula \eqref{eq:g-short-interval} gives
\[
g_P(2tP)
=
\#\left\{
1\leq h\leq tP
\mathrel{}\middle|\mathrel{}
\gcd(h,M)=\gcd(2tP-h,M)=1
\right\}.
\]

For every prime $p_i\mid P$,
\[
p_i\mid2tP.
\]
Therefore,
\[
p_i\mid2tP-h
\quad\Longleftrightarrow\quad
p_i\mid h.
\]
The same equivalence holds for the prime $2$. Only the
restriction contributed by the prime $3$ depends on
$\delta_3(t)$.

Suppose first that $\delta_3(t)=0$. Write $t=3u$. Then
\[
2tP=uM,
\]
so
\[
\gcd(2tP-h,M)=\gcd(h,M).
\]
Consequently, $g_P(2tP)$ is the number of integers coprime to
$M$ in the interval
\[
1\leq h\leq tP=\frac{uM}{2}.
\]

The reduced residues under $\delta_M$ occur in pairs
\[
c
\qquad\text{and}\qquad
M-c.
\]
Exactly one member of each pair lies in
$[1,M/2]$. Hence
\[
\#\left\{
1\leq h\leq \frac M2
\mathrel{}\middle|\mathrel{}
\gcd(h,M)=1
\right\}
=
\frac{\varphi(M)}2
=
\varphi(P).
\]
Here $\varphi(M)=2\varphi(P)$ because $M=6P$ and
$\gcd(P,6)=1$.
Dividing the interval $[1,uM/2]$ into complete periods and,
when necessary, one half-period gives
\[
g_P(2tP)
=
\frac{u\varphi(M)}2
=
u\varphi(P)
=
\frac{t\varphi(P)}3.
\]
This is the first term of \eqref{eq:ray-formula}, while the
character term vanishes because $\chi_3(t)=0$.

Suppose now that $\delta_3(t)\neq0$. Since $P$ is coprime to
$3$, one also has $\delta_3(tP)\neq0$. The restrictions at the
primes $2$ and $3$ are equivalent to
\begin{equation}
\label{eq:ray-local-conditions}
\delta_2(h)=1,
\qquad
\delta_3(h)=\delta_3(tP).
\end{equation}
Indeed, the first condition says that $h$ is odd. Under
$\delta_3$, the two excluded residues are $0$ and
$\delta_3(2tP)$. If $\delta_3(tP)=1$, the remaining residue is
$1$, and if $\delta_3(tP)=2$, the remaining residue is $2$.

Let $\mu$ denote the M\"obius function. Applying M\"obius inversion
to the condition $\gcd(h,P)=1$ gives
\[
g_P(2tP)
=
\sum_{d\mid P}\mu(d)
\#
\left\{
1\leq h\leq tP
\mathrel{}\middle|\mathrel{}
d\mid h,\
\delta_2(h)=1,\
\delta_3(h)=\delta_3(tP)
\right\}.
\]

Write
\[
h=d\ell,
\qquad
X_d=\frac{tP}{d}.
\]
Every divisor $d$ of $P$ is coprime to $6$. In particular, $d$
is odd, so the condition under $\delta_2$ becomes
$\delta_2(\ell)=1$. Moreover, $tP=dX_d$, and multiplication by
$d$ is invertible under $\delta_3$. The second condition in
\eqref{eq:ray-local-conditions} therefore becomes
\[
\delta_3(\ell)=\delta_3(X_d).
\]

For every positive integer $X$ satisfying $\delta_3(X)\neq0$,
define
\[
N(X)
=
\#
\left\{
1\leq\ell\leq X
\mathrel{}\middle|\mathrel{}
\delta_2(\ell)=1,\
\delta_3(\ell)=\delta_3(X)
\right\}.
\]
The four possible canonical residues of $X$ under $\delta_6$
give
\[
\begin{array}{c|c|c}
\delta_6(X)
& \text{admissible residue of $\ell$ under $\delta_6$}
& N(X)\\
\hline
1 & 1 & \dfrac{X+5}{6}\\[4pt]
2 & 5 & \dfrac{X-2}{6}\\[4pt]
4 & 1 & \dfrac{X+2}{6}\\[4pt]
5 & 5 & \dfrac{X+1}{6}
\end{array}
\]
and hence
\begin{equation}
\label{eq:ray-class-count}
N(X)
=
\frac X6
+\frac12\one{\delta_2(X)=1}
+\frac{\chi_3(X)}3.
\end{equation}

Since $P/d$ is odd for every $d\mid P$, one has
\[
\delta_2(X_d)=\delta_2(t).
\]
Substitution of \eqref{eq:ray-class-count} therefore gives
\[
g_P(2tP)
=
\sum_{d\mid P}\mu(d)
\left(
\frac{tP}{6d}
+\frac12\one{\delta_2(t)=1}
+\frac{\chi_3(tP/d)}3
\right).
\]

Since $P>1$,
\[
\sum_{d\mid P}\mu(d)=0.
\]
Also,
\[
\sum_{d\mid P}\mu(d)\frac{P}{d}
=
\varphi(P),
\]
and the multiplicativity of $\chi_3$ gives
\[
\sum_{d\mid P}
\mu(d)\chi_3(tP/d)
=
\chi_3(t)
\prod_{i=1}^{r}
\bigl(\chi_3(p_i)-1\bigr).
\]
Combining these three identities proves
\eqref{eq:ray-formula}.
\end{proof}

\section{Finite density, coverings, and deterministic positivity}

The exact CRT formula separates naturally into a finite local
density term and a bounded fluctuation. Recall from
\eqref{eq:A-definition} and \eqref{eq:kappa-definition} that
\[
A_P(n)
=
\prod_{i=1}^{r}
\left(p_i-2+\varepsilon_i(n)\right)
\]
and
\[
\kappa_P(n)
=
\left(1+\one{\delta_3(n)=0}\right)A_P(n).
\]

For
$\boldsymbol\alpha\neq\boldsymbol0$, define
\begin{equation}
\label{eq:B-discrepancy}
\Bcal_P(n)
=
\sum_{\boldsymbol\alpha\neq\boldsymbol0}
W_{\boldsymbol\alpha}(n)
\left[
L\left(
m(n),
Q_{\boldsymbol\alpha},
\eta_{\boldsymbol\alpha}(n)
\right)
-
\frac{m(n)+1}{Q_{\boldsymbol\alpha}}
\right].
\end{equation}

\begin{proposition}
\label{prop:C-density}
For every positive integer $n$,
\begin{equation}
\label{eq:C-density}
\Ccal_P(n)
=
\frac{m(n)+1}{P}A_P(n)+\Bcal_P(n).
\end{equation}
Moreover,
\begin{equation}
\label{eq:B-bound}
\left|\Bcal_P(n)\right|
<
\prod_{i=1}^{r}
\bigl(3-\varepsilon_i(n)\bigr)-1
\leq3^r-1.
\end{equation}
\end{proposition}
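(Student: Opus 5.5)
The plan is to start from the exact formula \eqref{eq:C-exact} of Theorem~\ref{thm:exact-formula} and split off the term $\boldsymbol\alpha=\boldsymbol0$, which equals $L(m(n),1,0)=m(n)+1$. For each $\boldsymbol\alpha\neq\boldsymbol0$ we add and subtract the linear main term $(m(n)+1)/Q_{\boldsymbol\alpha}$. This gives
\[
\Ccal_P(n)=(m(n)+1)\sum_{\boldsymbol\alpha\in\{0,1,2\}^r}\frac{W_{\boldsymbol\alpha}(n)}{Q_{\boldsymbol\alpha}}+\Bcal_P(n).
\]
The remaining sum factors over the primes, because $W_{\boldsymbol\alpha}$ and $1/Q_{\boldsymbol\alpha}$ are both products of local quantities indexed by $\alpha_i$. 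It therefore equals
\[
\prod_{i=1}^r\left(1+\frac{w_{i,1}(n)+w_{i,2}(n)}{p_i}\right)
=\prod_{i=1}^r\frac{p_i-2+\varepsilon_i(n)}{p_i}
=\frac{A_P(n)}{P}.
\]
This is the identity \eqref{eq:C-density}.

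For the bound \eqref{eq:B-bound}, we first control each bracket. By Lemma~\ref{lem:L-count}, writing $m=m(n)$, $q=Q_{\boldsymbol\alpha}$ and $a=\eta_{\boldsymbol\alpha}(n)$,
\[
L(m,q,a)-\frac{m+1}{q}=1+\left\lfloor\frac{m-a}{q}\right\rfloor-\frac{m+1}{q}.
\]
Write $m-a=jq+\rho$ with $0\le\rho\le q-1$. The bracket then equals $1-(a+\rho+1)/q$. Since $1\le a+\rho+1\le 2q-1$, the bracket lies in the open interval $(-1,1)$. In fact it lies in $[-1+1/q,\,1-1/q]$, so its absolute value is strictly less than $1$ whenever $q\ge2$, which holds for every $\boldsymbol\alpha\neq\boldsymbol0$.

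Next we apply the triangle inequality, using $|W_{\boldsymbol\alpha}(n)|\in\{0,1\}$. This gives
\[
|\Bcal_P(n)|\le\sum_{\boldsymbol\alpha\neq\boldsymbol0}|W_{\boldsymbol\alpha}(n)|\cdot\bigl|\text{bracket}\bigr|.
\]
The number of $\boldsymbol\alpha$ with nonzero weight is $\prod_i(3-\varepsilon_i(n))$, because $w_{i,1}$ vanishes exactly when $\varepsilon_i=1$. Removing $\boldsymbol\alpha=\boldsymbol0$ leaves $\prod_i(3-\varepsilon_i(n))-1$ terms. There is at least one such term, since $r\ge1$ and $\alpha_i=2$ always carries weight $-1$. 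Each of these terms is strictly less than $1$ in absolute value, so the sum is strictly smaller than $\prod_i(3-\varepsilon_i(n))-1$. The final inequality $\prod_i(3-\varepsilon_i(n))-1\le3^r-1$ is immediate.

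The main point needing care is the strictness of the inequality. It rests on the uniform per-term bound $|\text{bracket}|\le1-1/Q_{\boldsymbol\alpha}<1$, together with the fact that the set of nonzero-weight indices $\boldsymbol\alpha\neq\boldsymbol0$ is nonempty. The case $m(n)=-1$ (that is, $n=2$) also needs checking. Here $L=0$ and each bracket is $0$, so $|\Bcal_P(2)|=0$, which is still strictly below the bound because that bound is at least $1$.
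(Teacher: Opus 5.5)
Your proposal is correct and follows essentially the same route as the paper: you split off $\boldsymbol\alpha=\boldsymbol0$, factor $\sum_{\boldsymbol\alpha}W_{\boldsymbol\alpha}(n)/Q_{\boldsymbol\alpha}$ over the primes to get $A_P(n)/P$, bound each bracket by less than $1$, count the $\prod_i(3-\varepsilon_i(n))-1$ nonzero weights, and apply the triangle inequality. The differences are minor: you get the per-term bound $\bigl|L(m(n),Q_{\boldsymbol\alpha},\eta_{\boldsymbol\alpha}(n))-(m(n)+1)/Q_{\boldsymbol\alpha}\bigr|\le 1-1/Q_{\boldsymbol\alpha}$ directly from the floor formula of Lemma~\ref{lem:L-count} rather than from the paper's ``$v$ or $v+1$ occurrences'' count, and you state explicitly that some $\boldsymbol\alpha\neq\boldsymbol0$ has nonzero weight, which the strict inequality needs and the paper leaves implicit.
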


\begin{proof}
From the definitions of
$W_{\boldsymbol\alpha}(n)$ and
$Q_{\boldsymbol\alpha}$,
\begin{align*}
\sum_{\boldsymbol\alpha\in\{0,1,2\}^r}
\frac{W_{\boldsymbol\alpha}(n)}
{Q_{\boldsymbol\alpha}}
&=
\prod_{i=1}^{r}
\left(
1-\frac{1-\varepsilon_i(n)}{p_i}
-\frac1{p_i}
\right)
\\
&=
\prod_{i=1}^{r}
\frac{p_i-2+\varepsilon_i(n)}{p_i}
\\
&=
\frac{A_P(n)}{P}.
\end{align*}
Separating
\[
\frac{m(n)+1}{Q_{\boldsymbol\alpha}}
\]
from each term of \eqref{eq:C-exact} gives
\eqref{eq:C-density}.

For every nonzero $\boldsymbol\alpha$, one has
$Q_{\boldsymbol\alpha}>1$. Put $N=m(n)+1$. If $N=0$, which
occurs only when $n=2$, then the discrepancy below is zero. If
$N>0$, write
\[
N=vQ_{\boldsymbol\alpha}+s,
\qquad
0\leq s<Q_{\boldsymbol\alpha}.
\]
Every canonical residue class occurs either $v$ or $v+1$ times
in any interval of $N$ consecutive integers, whereas its mean is
$v+s/Q_{\boldsymbol\alpha}$. Since
$Q_{\boldsymbol\alpha}>1$, it follows in both cases that
\[
\left|
L\left(
m(n),
Q_{\boldsymbol\alpha},
\eta_{\boldsymbol\alpha}(n)
\right)
-
\frac{m(n)+1}{Q_{\boldsymbol\alpha}}
\right|
<1.
\]

Finally,
\begin{align*}
\sum_{\boldsymbol\alpha\neq\boldsymbol0}
\left|W_{\boldsymbol\alpha}(n)\right|
&=
\prod_{i=1}^{r}
\left(
1+\left|w_{i,1}(n)\right|+\left|w_{i,2}(n)\right|
\right)-1
\\
&=
\prod_{i=1}^{r}
\bigl(3-\varepsilon_i(n)\bigr)-1.
\end{align*}
The triangle inequality now proves \eqref{eq:B-bound}.
\end{proof}

\begin{theorem}[Finite-density decomposition]
\label{thm:density-formula}
For every positive integer $n$,
\begin{equation}
\label{eq:density-formula}
g_P(2n)
=
\frac{n}{M}\kappa_P(n)+\Ecal_P(n),
\qquad
M=6P,
\end{equation}
where
\begin{equation}
\label{eq:E-branches}
\Ecal_P(n)
=
\begin{cases}
\Bcal_P(n),
&
\delta_3(n)=0,
\\[6pt]
\dfrac{\Bcal_P(n)+D_P(n)}{2}
+\dfrac{A_P(n)}{3P},
&
\delta_3(n)=1,
\\[10pt]
\dfrac{\Bcal_P(n)+D_P(n)}{2}
-\dfrac{A_P(n)}{3P},
&
\delta_3(n)=2.
\end{cases}
\end{equation}
In particular,
\begin{equation}
\label{eq:E-bound}
|\Ecal_P(n)|<3^r-1.
\end{equation}
\end{theorem}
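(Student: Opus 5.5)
The identity \eqref{eq:density-formula} should follow by substituting the decomposition of Proposition~\ref{prop:C-density} into the exact formula \eqref{eq:g-exact} of Theorem~\ref{thm:exact-formula}. The only computation is to express the main term $\frac{m(n)+1}{P}A_P(n)$ in terms of $n$, using the branch definition \eqref{eq:m-definition} of $m(n)$. The bound \eqref{eq:E-bound} will then come from the estimate \eqref{eq:B-bound} together with the crude bounds $0\le D_P(n)\le 1$ and $A_P(n)<P$.

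\emph{Case $\delta_3(n)=0$.} Here $m(n)+1=n/3$, so
\[
\frac{m(n)+1}{P}A_P(n)=\frac{n}{3P}A_P(n)=\frac{n}{6P}\,2A_P(n)=\frac{n}{M}\kappa_P(n),
\]
since $\kappa_P(n)=2A_P(n)$ in this case. Because $g_P(2n)=\Ccal_P(n)$, this gives $\Ecal_P(n)=\Bcal_P(n)$.

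\emph{Case $\delta_3(n)=1$.} Here $m(n)+1=(n+2)/3$ and $\kappa_P(n)=A_P(n)$. By \eqref{eq:g-exact},
\[
g_P(2n)=\frac{\Ccal_P(n)+D_P(n)}{2}=\frac{(n+2)A_P(n)}{6P}+\frac{\Bcal_P(n)+D_P(n)}{2}.
\]
Splitting $\frac{(n+2)A_P(n)}{6P}=\frac{n}{M}\kappa_P(n)+\frac{A_P(n)}{3P}$ yields the second branch of \eqref{eq:E-branches}.

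\emph{Case $\delta_3(n)=2$.} This is identical except that $m(n)+1=(n-2)/3$, which produces the term $-A_P(n)/(3P)$.

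\emph{The bound.} In the first branch, $|\Ecal_P(n)|=|\Bcal_P(n)|<3^r-1$ directly by \eqref{eq:B-bound}. In the other two branches I would use $D_P(n)\in\{0,1\}$ and
\[
\frac{A_P(n)}{P}=\prod_i\frac{p_i-2+\varepsilon_i(n)}{p_i}<1,
\]
which give
\[
|\Ecal_P(n)|<\frac{3^r-1}{2}+\frac12+\frac13=\frac{3^r}{2}+\frac13.
\]
This last quantity is at most $3^r-1$ exactly when $3^r\ge 8/3$, which holds for every $r\ge1$. The inequality stays strict because \eqref{eq:B-bound} is strict.

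The main point of care is this final bound. The halving of $\Bcal_P(n)$ must absorb the extra constants $D_P(n)/2$ and $A_P(n)/(3P)$, and that only works because $r\ge1$. A secondary check is that the edge case $n=2$ fits the second branch: there $m(2)=-1$, so $\Ccal_P(2)=0$, and the formulas still hold because Proposition~\ref{prop:C-density} covers it.
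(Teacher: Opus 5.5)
Your proposal is correct and follows the paper's proof essentially step for step: you substitute Proposition~\ref{prop:C-density} into \eqref{eq:g-exact}, rewrite $m(n)+1$ as $n/3$, $(n+2)/3$ or $(n-2)/3$ in the three residue classes, and absorb $D_P(n)/2+A_P(n)/(3P)<\tfrac12+\tfrac13$ into the halved discrepancy using $r\geq1$. The only slip is cosmetic: $n=2$ has $\delta_3(2)=2$, so it falls under the third branch rather than the second, and there $m(2)+1=0=(2-2)/3$ keeps the formula consistent.
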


\begin{proof}
Suppose first that $\delta_3(n)=0$. Then
\[
m(n)+1=\frac n3,
\qquad
\kappa_P(n)=2A_P(n),
\]
and $g_P(2n)=\Ccal_P(n)$. Hence
\[
g_P(2n)
=
\frac{nA_P(n)}{3P}+\Bcal_P(n)
=
\frac{n\kappa_P(n)}{6P}+\Bcal_P(n).
\]

If $\delta_3(n)=1$, then
\[
m(n)+1=\frac{n+2}{3},
\qquad
\kappa_P(n)=A_P(n).
\]
Using \eqref{eq:g-exact} and \eqref{eq:C-density}, we obtain
\begin{align*}
g_P(2n)
&=
\frac12
\left(
\frac{n+2}{3P}A_P(n)
+\Bcal_P(n)+D_P(n)
\right)
\\
&=
\frac{n\kappa_P(n)}{6P}
+
\frac{\Bcal_P(n)+D_P(n)}2
+
\frac{A_P(n)}{3P}.
\end{align*}

If $\delta_3(n)=2$, then
\[
m(n)+1=\frac{n-2}{3},
\qquad
\kappa_P(n)=A_P(n),
\]
and the same calculation gives
\[
g_P(2n)
=
\frac{n\kappa_P(n)}{6P}
+
\frac{\Bcal_P(n)+D_P(n)}2
-
\frac{A_P(n)}{3P}.
\]
This identity also includes $n=2$, for which
$m(2)+1=0$ and $D_P(2)=0$.

It remains to prove the uniform bound. When
$\delta_3(n)=0$, it follows directly from
\eqref{eq:B-bound}. In the other two cases,
\begin{align*}
|\Ecal_P(n)|
&\leq
\frac{|\Bcal_P(n)|}{2}
+\frac{D_P(n)}2
+\frac{A_P(n)}{3P}
\\
&<
\frac{3^r-1}{2}
+\frac12
+\frac13.
\end{align*}
Here
\[
0<\frac{A_P(n)}P<1
\]
because each factor of $A_P(n)$ is strictly smaller than the
corresponding prime $p_i$. Since $r\geq1$,
\[
\frac{3^r-1}{2}
+\frac12
+\frac13
<
3^r-1,
\]
which proves \eqref{eq:E-bound}.
\end{proof}

\begin{corollary}[Density criterion]
\label{cor:density-positivity}
If
\begin{equation}
\label{eq:density-criterion}
\frac{n}{M}\kappa_P(n)\geq3^r-1,
\end{equation}
then
\[
g_P(2n)>0.
\]
\end{corollary}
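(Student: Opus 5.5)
This follows directly from Theorem~\ref{thm:density-formula}. We combine the exact decomposition \eqref{eq:density-formula} with the strict uniform bound \eqref{eq:E-bound} and then use integrality.

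First, by \eqref{eq:density-formula},
\[
g_P(2n)=\frac{n}{M}\kappa_P(n)+\Ecal_P(n).
\]
By \eqref{eq:E-bound}, $\Ecal_P(n)>-(3^r-1)$. Under hypothesis \eqref{eq:density-criterion},
\[
g_P(2n)>\frac{n}{M}\kappa_P(n)-(3^r-1)\geq0 .
\]
The inequality is strict because \eqref{eq:E-bound} is strict, so no integrality argument is even needed. Since $g_P(2n)$ is a cardinality, hence a nonnegative integer, a strictly positive value means $g_P(2n)\geq1$.

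One should check that \eqref{eq:E-bound} holds uniformly in all three branches of \eqref{eq:E-branches}, including the degenerate case $n=2$. The proof of Theorem~\ref{thm:density-formula} already addresses this. There $m(2)+1=0$ and $D_P(2)=0$, and indeed $\frac{2}{M}\kappa_P(2)<3^r-1$, so the hypothesis cannot hold and no difficulty arises.

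There is no real obstacle here. The only point deserving care is that the bound on $\Ecal_P(n)$ is strict, since a non-strict bound would yield only $g_P(2n)\geq0$. The strictness comes from Proposition~\ref{prop:C-density}, where each individual floor discrepancy is strictly less than $1$ and at least one nonzero $\boldsymbol\alpha$ is present because $r\geq1$.
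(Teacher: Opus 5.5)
Your proof is correct and is essentially the paper's own argument: combine the decomposition \eqref{eq:density-formula} with the strict bound \eqref{eq:E-bound} to get $g_P(2n)>\frac{n}{M}\kappa_P(n)-(3^r-1)\geq0$. The integrality remark and the separate check of $n=2$ are harmless but unnecessary, since the strictness of \eqref{eq:E-bound} already gives $g_P(2n)>0$ for every $n$ satisfying the hypothesis.
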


\begin{proof}
Theorem~\ref{thm:density-formula} gives
\[
g_P(2n)
>
\frac{n}{M}\kappa_P(n)-(3^r-1).
\]
Under \eqref{eq:density-criterion}, the right-hand side is
nonnegative. The strict inequality therefore gives
$g_P(2n)>0$.
\end{proof}

\begin{example}
The following two cases illustrate the possible signs of the
finite discrepancy:
\[
\begin{array}{c|c|c|c|c|c}
P & n & g_P(2n) & \kappa_P(n)
& \dfrac{n\kappa_P(n)}{6P} & \Ecal_P(n)\\
\hline
35 & 20 & 2 & 20 & \dfrac{40}{21} & \dfrac{2}{21}\\[6pt]
385 & 14 & 0 & 162 & \dfrac{54}{55} & -\dfrac{54}{55}
\end{array}
\]

For $P=35$ and $n=20$, the two admissible representations are
\[
40=11+29
\qquad\text{and}\qquad
40=17+23.
\]
For $P=385$ and $n=14$, there is no admissible representation,
despite the positive value of the finite local density. The
second case shows that positive local density does not by itself
force positivity on an interval that is short relative to the
period.
\end{example}

\subsection{Coverings and paired gaps}

For every $i\in\{1,\dots,r\}$, define
\[
X_i(n)
=
\left\{
z\in\Zcal(n)
\mathrel{}\middle|\mathrel{}
\delta_{p_i}(z)=x_i(n)
\right\}
\]
and
\[
Y_i(n)
=
\left\{
z\in\Zcal(n)
\mathrel{}\middle|\mathrel{}
\delta_{p_i}(z)=y_i(n)
\right\}.
\]

Thus $X_i(n)\cup Y_i(n)$ is the set of parameters eliminated
by the prime $p_i$.

\begin{proposition}[Covering equivalence]
\label{prop:covering}
For every positive integer $n$,
\begin{equation}
\label{eq:covering-equivalence}
g_P(2n)=0
\quad\Longleftrightarrow\quad
\Ccal_P(n)=0
\quad\Longleftrightarrow\quad
\Zcal(n)
\subseteq
\bigcup_{i=1}^{r}
\bigl(X_i(n)\cup Y_i(n)\bigr).
\end{equation}
\end{proposition}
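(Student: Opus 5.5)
The plan is to prove the two equivalences separately, starting with the second, which is purely set-theoretic. By definition \eqref{eq:C-definition}, $\Ccal_P(n)$ counts the parameters $z\in\Zcal(n)$ with $\gcd(H_z(n)K_z(n),P)=1$. Since $P$ is squarefree, this condition fails exactly when some $p_i$ divides $H_z(n)K_z(n)$. By \eqref{eq:local-exclusion}, that happens exactly when $\delta_{p_i}(z)\in\{x_i(n),y_i(n)\}$ for some $i$, that is, when $z\in X_i(n)\cup Y_i(n)$ for some $i$. Hence the complement in $\Zcal(n)$ of $\bigcup_i(X_i(n)\cup Y_i(n))$ is precisely the set counted by $\Ccal_P(n)$. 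So $\Ccal_P(n)=0$ if and only if $\Zcal(n)$ is contained in the union.

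For the first equivalence, I will use \eqref{eq:g-exact} from Theorem~\ref{thm:exact-formula} and split according to $\delta_3(n)$.

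\emph{Case $\delta_3(n)=0$.} Here $g_P(2n)=\Ccal_P(n)$, so there is nothing more to prove.

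\emph{Case $\delta_3(n)\in\{1,2\}$.} Here $g_P(2n)=(\Ccal_P(n)+D_P(n))/2$, with both terms nonnegative. If $\Ccal_P(n)=0$, I must show $D_P(n)=0$. Suppose instead that $D_P(n)=1$. The proof of Theorem~\ref{thm:exact-formula} shows that $n$ is then odd, that $m(n)$ is even, and that the fixed parameter $z=m(n)/2$ lies in $\Zcal(n)$ with $H_z(n)=K_z(n)=n$. Since $\gcd(n,P)=1$, this parameter is admissible, which contradicts $\Ccal_P(n)=0$. I also need $m(n)\geq0$ here: the only case with $m(n)=-1$ is $n=2$, which is even, so $D_P(2)=0$ anyway. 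Conversely, if $g_P(2n)=0$, then $\Ccal_P(n)+D_P(n)=0$, and nonnegativity gives $\Ccal_P(n)=0$.

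A cleaner alternative for the first equivalence avoids the formula altogether. By Proposition~\ref{prop:six-wheel}, the map $z\mapsto\{H_z(n),K_z(n)\}$ from $\Zcal(n)$ is surjective onto the unordered representations with both summands coprime to $6$. A representation is counted by $g_P$ exactly when its parameters are admissible, so admissible parameters exist if and only if counted representations exist. The only point needing care in either route is this fixed-point term $D_P(n)$, and the argument above handles it; I expect no real obstacle.
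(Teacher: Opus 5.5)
Your proposal is correct and follows the paper's proof essentially step for step: the second equivalence comes from \eqref{eq:local-exclusion}, and the first from \eqref{eq:g-exact}, using the fact that $D_P(n)=1$ makes the central parameter $z=m(n)/2$ admissible, so that $\Ccal_P(n)\geq1$. Your alternative via the surjectivity in Proposition~\ref{prop:six-wheel} is also valid and avoids the fixed-point term altogether, because admissibility of $z$ depends only on the unordered pair $\{H_z(n),K_z(n)\}$.
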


\begin{proof}
By \eqref{eq:local-exclusion}, a parameter
$z\in\Zcal(n)$ fails to contribute to $\Ccal_P(n)$ exactly when
it belongs to at least one of the sets
$X_i(n)\cup Y_i(n)$. Therefore,
\[
\Ccal_P(n)=0
\quad\Longleftrightarrow\quad
\Zcal(n)
\subseteq
\bigcup_{i=1}^{r}
\bigl(X_i(n)\cup Y_i(n)\bigr).
\]

If $\delta_3(n)=0$, then
\[
g_P(2n)=\Ccal_P(n),
\]
so the first equivalence is immediate.

Suppose that $\delta_3(n)\in\{1,2\}$. The exact formula gives
\[
g_P(2n)
=
\frac{\Ccal_P(n)+D_P(n)}2.
\]
If $D_P(n)=1$, then $n$ is odd and coprime to $3$. Consequently,
\[
n=
\begin{cases}
6u+1, & \delta_3(n)=1,\\
6u+5, & \delta_3(n)=2,
\end{cases}
\]
for some $u\geq0$. In both cases $m(n)=2u$ is even. The central
parameter
\[
z=\frac{m(n)}2
\]
belongs to $\Zcal(n)$ and gives
\[
H_z(n)=K_z(n)=n.
\]
Because $\gcd(n,6P)=1$, this parameter is admissible. Hence
\[
D_P(n)=1
\quad\Longrightarrow\quad
\Ccal_P(n)\geq1.
\]
It follows that $\Ccal_P(n)=0$ automatically implies
$D_P(n)=0$, and the first equivalence follows.
\end{proof}

Define the periodic lift of $\Rcal_P(n)$ by
\begin{equation}
\label{eq:R-lift}
\widetilde{\Rcal}_P(n)
=
\left\{
a\in\Z
\mathrel{}\middle|\mathrel{}
\gcd(a,M)=1,\
\gcd(2n-a,M)=1
\right\}.
\end{equation}
This set contains exactly $\kappa_P(n)$ integers in every
complete interval of length $M$.

Formula \eqref{eq:g-short-interval} becomes
\begin{equation}
\label{eq:g-lift-cardinality}
g_P(2n)
=
\#
\left(
\widetilde{\Rcal}_P(n)\cap[1,n]
\right).
\end{equation}
In particular,
\begin{equation}
\label{eq:g-lift-short}
g_P(2n)>0
\quad\Longleftrightarrow\quad
\widetilde{\Rcal}_P(n)\cap[1,n]\neq\varnothing.
\end{equation}

Reflection about $n$ gives the equivalent formulation
\begin{equation}
\label{eq:g-lift-long}
g_P(2n)>0
\quad\Longleftrightarrow\quad
\widetilde{\Rcal}_P(n)\cap[1,2n-1]\neq\varnothing.
\end{equation}
Indeed, if
\[
a\in\widetilde{\Rcal}_P(n)\cap[n+1,2n-1],
\]
then
\[
2n-a\in\widetilde{\Rcal}_P(n)\cap[1,n-1].
\]

\begin{definition}
\label{def:paired-gap}
The fixed-separation paired gap $J_M(2n)$ is the least positive
integer $L$ such that every interval of $L$ consecutive
integers contains an element of
$\widetilde{\Rcal}_P(n)$. Equivalently,
\begin{equation}
\label{eq:J-definition}
J_M(2n)
=
\min
\left\{
L\in\Z_{>0}
\mathrel{}\middle|\mathrel{}
\begin{array}{c}
\text{for every }a\in\Z\text{ there exists}\\
h\in\{a+1,\dots,a+L\}
\text{ with }h\in\widetilde{\Rcal}_P(n)
\end{array}
\right\}.
\end{equation}
\end{definition}

The minimum exists because
$\widetilde{\Rcal}_P(n)$ is periodic and
$\kappa_P(n)>0$.

\begin{proposition}
\label{prop:J-bound}
For every positive integer $n$,
\begin{equation}
\label{eq:J-bound}
J_M(2n)
\leq M-\kappa_P(n)+1
\leq M-2.
\end{equation}
Moreover,
\begin{equation}
\label{eq:J-positivity}
J_M(2n)\leq2n-1
\quad\Longrightarrow\quad
g_P(2n)>0.
\end{equation}
\end{proposition}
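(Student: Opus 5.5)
Three things have to be shown: the gap bound $J_M(2n)\le M-\kappa_P(n)+1$, the numerical bound $M-\kappa_P(n)+1\le M-2$, and the positivity implication \eqref{eq:J-positivity}. Throughout, $\widetilde{\Rcal}_P(n)$ is $M$-periodic and has exactly $\kappa_P(n)$ elements in every interval of $M$ consecutive integers, by Proposition~\ref{prop:R-cardinality}.

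\emph{The gap bound.} Put $L=M-\kappa_P(n)+1$ and take any $a\in\Z$. Suppose the block $\{a+1,\dots,a+L\}$ contained no element of $\widetilde{\Rcal}_P(n)$. Extend it to the block $\{a+1,\dots,a+M\}$ of length $M$, which contains exactly $\kappa_P(n)$ elements of the set. All of them would then lie in the remaining $M-L=\kappa_P(n)-1$ positions $\{a+L+1,\dots,a+M\}$, which is impossible. So every window of length $L$ meets $\widetilde{\Rcal}_P(n)$, and by Definition~\ref{def:paired-gap}, $J_M(2n)\le L$.

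\emph{The numerical bound.} We need $\kappa_P(n)\ge 3$. Each factor $p_i-2+\varepsilon_i(n)$ is at least $p_i-2\ge 3$, since $p_i\ge5$. The factor $1+\one{\delta_3(n)=0}$ is at least $1$. Since $r\ge1$, this gives $\kappa_P(n)\ge3$, hence $M-\kappa_P(n)+1\le M-2$. This also confirms the standing claim that $\kappa_P(n)>0$.

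\emph{Positivity.} Assume $J_M(2n)\le 2n-1$. Apply Definition~\ref{def:paired-gap} with $a=0$ and $L=J_M(2n)$: there is some $h$ with $1\le h\le J_M(2n)\le 2n-1$ and $h\in\widetilde{\Rcal}_P(n)$. Then $\widetilde{\Rcal}_P(n)\cap[1,2n-1]\neq\varnothing$, and \eqref{eq:g-lift-long} gives $g_P(2n)>0$.

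For the case $n=1$, the hypothesis would force $J_M(2)\le1$, i.e.\ every integer lies in $\widetilde{\Rcal}_P(n)$; that is false, so the implication holds vacuously. In any case the argument above does not depend on the size of $n$.

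The only step needing any care is the pigeonhole count in the gap bound, which depends on exactly $\kappa_P(n)$ elements lying in every complete window of length $M$. That follows from periodicity together with Proposition~\ref{prop:R-cardinality}.
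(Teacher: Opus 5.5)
Your proof is correct and follows the paper's argument: counting admissible positions in one period of length $M$ gives the gap bound, the factorwise estimate gives $\kappa_P(n)\geq 3$, and the window with $a=0$ combined with \eqref{eq:g-lift-long} gives positivity. Your pigeonhole on a linear window of length $M$ is a slightly tidier version of the paper's count of circular runs; note that it uses $L\leq M$, which your later bound $\kappa_P(n)\geq3$ supplies.
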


\begin{proof}
In one circular period of length $M$, there are
$\kappa_P(n)$ admissible positions and
$M-\kappa_P(n)$ inadmissible positions. A consecutive run of
inadmissible positions therefore has length at most
$M-\kappa_P(n)$. Hence every interval of
\[
M-\kappa_P(n)+1
\]
consecutive integers contains an admissible element.

Since $r\geq1$ and every $p_i\geq5$, formula
\eqref{eq:kappa-definition} gives
\[
\kappa_P(n)\geq3.
\]
Therefore,
\[
M-\kappa_P(n)+1\leq M-2.
\]

If $J_M(2n)\leq2n-1$, the interval $[1,2n-1]$ contains an
element of $\widetilde{\Rcal}_P(n)$. The conclusion follows
from \eqref{eq:g-lift-long}.
\end{proof}

\begin{remark}
The identity
\[
\gcd(2n-h,M)=\gcd(h-2n,M)
\]
shows that membership of $h$ in $\widetilde{\Rcal}_P(n)$ is
equivalent to simultaneous coprimality of the paired progression
$(h,h-2n)$ with $M$. Thus $J_M(2n)$ is the fixed even-separation
counterpart of the paired Jacobsthal functions studied in
\cite{ZillerMorack2017,ZillerMorack2017b}. If $j_2(M)$ denotes
the corresponding global function, obtained by maximizing over
all even separations, then directly from the definitions
\[
J_M(2n)\leq j_2(M).
\]
Only the fixed-separation quantity and the elementary bound in
Proposition~\ref{prop:J-bound} are used here. No conjectural
bound for $j_2(M)$ is assumed.
\end{remark}

\subsection{Deterministic positivity}

\begin{theorem}[Half-period positivity]
\label{thm:half-period}
If
\begin{equation}
\label{eq:half-period-condition}
n\geq3P,
\end{equation}
then
\begin{equation}
\label{eq:half-period-positive}
g_P(2n)>0.
\end{equation}
\end{theorem}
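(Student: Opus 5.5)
The plan is to use the short-interval form \eqref{eq:g-lift-short}: $g_P(2n)>0$ exactly when $\widetilde{\Rcal}_P(n)\cap[1,n]\neq\varnothing$. Since $n\geq 3P=M/2$, it suffices to find an element of $\widetilde{\Rcal}_P(n)$ in the half-period window $[1,M/2]$. The main tool is the reflection symmetry $a\mapsto 2n-a$ of $\widetilde{\Rcal}_P(n)$, combined with the symmetry $a\mapsto -a$ modulo $M$. For $c\in\Rcal_P(n)$ (so $\kappa_P(n)\geq 3$ classes exist by Proposition~\ref{prop:R-cardinality}), the canonical representative $c\in\{1,\dots,M-1\}$ lies either in $[1,M/2]$ or in $[M/2,M-1]$. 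In the first case we are done. So the task is to handle the situation in which every residue of $\Rcal_P(n)$ has canonical representative in $(M/2,M)$.

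For the remaining case I would use the reflection involution $\rho_n$ of \eqref{eq:reflection}. Write $2n=jM+\beta$ with $0\leq\beta<M$, so that $\rho_n(c)=\delta_M(\beta-c)$. If $c\in(M/2,M)$ and $c\le\beta$, then $\rho_n(c)=\beta-c<M/2$, and we are done. If $c>\beta$, then $\rho_n(c)=M+\beta-c$, and this lies in $(M/2,M)$ only when $c<M/2+\beta$. Hence the bad configuration forces every $c\in\Rcal_P(n)$ into the interval $(\max(M/2,\beta),\,\min(M,M/2+\beta))$. In that interval the map $c\mapsto M+\beta-c$ is an involution of $\Rcal_P(n)$.

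Rather than pursue this case analysis, I expect the cleanest route is direct. Because $n\ge M/2$, the interval $[1,2n-1]$ has length at least $M-1$. By \eqref{eq:g-lift-long}, it suffices to find an element of $\widetilde{\Rcal}_P(n)$ in $[1,2n-1]$. An interval of $M-1$ consecutive integers starting at $1$ contains representatives of every residue class modulo $M$ except $0$. Since $0\notin\Rcal_P(n)$, because $\gcd(0,M)=M\neq1$, every class of $\Rcal_P(n)$ has a representative in $[1,M-1]\subseteq[1,2n-1]$. Since $\kappa_P(n)\geq3>0$, this gives an element, and therefore $g_P(2n)>0$.

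So the key steps, in order, are these. First, note that $2n-1\geq M-1$ from $n\geq 3P$. Second, note that $\kappa_P(n)>0$ and $0\notin\Rcal_P(n)$. Third, apply the equivalence \eqref{eq:g-lift-long}. The only delicate point is justifying \eqref{eq:g-lift-long} itself, namely that an admissible $a$ in $[n+1,2n-1]$ reflects to one in $[1,n-1]$. This is already recorded in the paper, so there is no real obstacle beyond checking the boundary at $n=3P$.
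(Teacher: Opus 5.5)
Your final argument is correct. It shares the paper's skeleton: locate an element of $\widetilde{\Rcal}_P(n)$ in $[1,2n-1]$, then invoke \eqref{eq:g-lift-long}. It differs in the key step.

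The paper obtains that element from the paired-gap bound of Proposition~\ref{prop:J-bound}. Since $J_M(2n)\leq M-\kappa_P(n)+1\leq M-2\leq 2n-1$, \emph{every} window of $2n-1$ consecutive integers meets $\widetilde{\Rcal}_P(n)$, and \eqref{eq:J-positivity} finishes.

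You instead use the \emph{position} of the window. It starts at $1$ and contains $[1,M-1]$, so it contains every nonzero canonical residue. Hence it contains all of $\Rcal_P(n)$, which is nonempty because $\kappa_P(n)\geq 3$ and $0\notin\Rcal_P(n)$.

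What each route buys:
\begin{itemize}
\item Your route is more elementary and self-contained: it needs no gap function, only $\kappa_P(n)>0$. Pushed slightly further, it gives an exact criterion. Since $\min\Rcal_P(n)$ is the least positive element of $\widetilde{\Rcal}_P(n)$, one has $g_P(2n)>0\iff\min\Rcal_P(n)\leq 2n-1$ for every $n$.
\item The paper's route uses only the length of the window. This ties the theorem to the paired Jacobsthal framework: any sharper uniform bound on $J_M(2n)$, or on $j_2(M)$, would lower the threshold $n\geq 3P$ directly.
\end{itemize}

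The case analysis in your first two paragraphs is unnecessary and can be dropped.
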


\begin{proof}
Since $M=6P$, condition \eqref{eq:half-period-condition} is
equivalent to
\[
2n\geq M.
\]
Therefore,
\[
2n-1\geq M-1.
\]
By Proposition~\ref{prop:J-bound},
\[
J_M(2n)\leq M-2\leq2n-1.
\]
The conclusion follows from \eqref{eq:J-positivity}.
\end{proof}

\begin{proposition}[Complete-block lower bound]
\label{prop:block-bound}
For every positive integer $n$,
\begin{equation}
\label{eq:block-bound}
g_P(2n)
\geq
\left\lfloor\frac{n}{M}\right\rfloor
\kappa_P(n).
\end{equation}
Consequently, if $n\geq3P$, then
\begin{equation}
\label{eq:combined-bound}
g_P(2n)
\geq
\max
\left\{
1,\
\left\lfloor\frac{n}{M}\right\rfloor
\kappa_P(n)
\right\}.
\end{equation}
\end{proposition}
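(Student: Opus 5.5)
The plan is to deduce \eqref{eq:block-bound} from the affine periodicity identity together with nonnegativity of $g_P$, and then to combine it with Theorem~\ref{thm:half-period}.

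Write $t=\lfloor n/M\rfloor$. If $t=0$ the bound \eqref{eq:block-bound} is trivial, since $g_P(2n)\geq0$ by definition as a cardinality.

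If $t\geq1$, put $s=n-tM$, so that $0\leq s<M$. There are two cases.

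\emph{Case $s\geq1$.} Then $n=s+tM$ with $1\leq s\leq M$. This is exactly the decomposition of Corollary~\ref{cor:quasipolynomial}, which gives
\[
g_P(2n)=g_P(2s)+t\kappa_P(s).
\]
Since $\kappa_P(s)=\kappa_P(n)$ (all relevant residues $\delta_3$, $\delta_{p_i}$ are invariant under shifts by $M$) and $g_P(2s)\geq0$, we obtain $g_P(2n)\geq t\kappa_P(n)$.

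\emph{Case $s=0$.} Here $M\mid n$. Corollary~\ref{cor:quasipolynomial} writes $n=M+(t-1)M$ with $s=M$, giving
\[
g_P(2n)=g_P(2M)+(t-1)\kappa_P(M).
\]
So it suffices to show $g_P(2M)\geq\kappa_P(M)=\kappa_P(n)$. This is the only slightly delicate point. I would handle it via \eqref{eq:g-lift-cardinality}: $g_P(2M)$ counts the elements of $\widetilde{\Rcal}_P(M)$ in $[1,M]$. That interval is a complete interval of length $M$, so by the remark after \eqref{eq:R-lift} it contains exactly $\kappa_P(M)$ such integers. Hence $g_P(2M)=\kappa_P(M)$, which in fact gives equality.

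Alternatively, Theorem~\ref{thm:ray-formula} with $t=6$ gives the same value directly. A cleaner uniform route, avoiding the case split altogether, is to apply \eqref{eq:g-lift-cardinality} for general $n$: the interval $[1,n]$ contains the $t$ disjoint complete blocks $[1+jM,(j+1)M]$ for $0\leq j<t$, each holding exactly $\kappa_P(n)$ elements of $\widetilde{\Rcal}_P(n)$ by periodicity. I would present this direct counting argument as the main proof.

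For \eqref{eq:combined-bound}, assume $n\geq3P$. Then Theorem~\ref{thm:half-period} gives $g_P(2n)\geq1$, because $g_P$ is integer-valued and positive. Combining this with \eqref{eq:block-bound} yields the maximum.

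The main obstacle is only making sure that the count of $\kappa_P(n)$ elements per complete interval is justified. It follows from Proposition~\ref{prop:R-cardinality}, since $\widetilde{\Rcal}_P(n)$ is the full preimage of $\Rcal_P(n)$ under reduction modulo $M$.
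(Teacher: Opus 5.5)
Your proposal is correct, and the direct block-counting argument you designate as your main proof is exactly the paper's proof. The paper uses \eqref{eq:g-lift-cardinality}, notes that $[1,n]$ contains $\lfloor n/M\rfloor$ disjoint complete periods each holding $\kappa_P(n)$ elements of $\widetilde{\Rcal}_P(n)$, and then invokes Theorem~\ref{thm:half-period} for the second bound; your alternative route through Corollary~\ref{cor:quasipolynomial}, including the $s=0$ case via $g_P(2M)=\kappa_P(M)$, is also valid but not needed.
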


\begin{proof}
By \eqref{eq:g-lift-cardinality}, $g_P(2n)$ counts the elements
of $\widetilde{\Rcal}_P(n)$ in $[1,n]$. This interval contains
$\lfloor n/M\rfloor$ pairwise disjoint complete blocks of
length $M$, and each such block contains exactly
$\kappa_P(n)$ admissible integers. This proves
\eqref{eq:block-bound}. The second assertion follows from
Theorem~\ref{thm:half-period}.
\end{proof}

\begin{corollary}[Product cutoff]
\label{cor:product-cutoff}
If
\begin{equation}
\label{eq:product-cutoff}
P\leq\sqrt{2n},
\end{equation}
then
\[
g_P(2n)>0.
\]
\end{corollary}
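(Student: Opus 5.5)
The plan is to reduce the product cutoff to the half-period positivity theorem, Theorem~\ref{thm:half-period}, which needs only $n\geq3P$, that is, $2n\geq6P$. The hypothesis $P\leq\sqrt{2n}$ gives $2n\geq P^2$. So it suffices to have $P^2\geq6P$, i.e. $P\geq6$, whenever possible. Since $P$ is a product of distinct primes $p_i\geq5$, either $P=5$ (the case $r=1$, $p_1=5$) or $P\geq7$. In the second case we get $2n\geq P^2\geq7P>6P$, hence $n\geq3P$, and Theorem~\ref{thm:half-period} gives $g_P(2n)>0$ directly.

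The only remaining case is $P=5$, so $M=30$. The hypothesis $5\leq\sqrt{2n}$ means $2n\geq25$, i.e. $n\geq13$. Theorem~\ref{thm:half-period} already covers $n\geq15$, which leaves only the two values $n=13$ and $n=14$. For these I will exhibit admissible representations explicitly, with both summands coprime to $30$:
\[
26=7+19,
\qquad
28=11+17.
\]
Hence $g_5(26)\geq1$ and $g_5(28)\geq1$.

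There is no real obstacle. The only point needing care is the small-prime boundary case $P=5$. There the inequality $P^2\geq6P$ fails, so the direct reduction to the half-period theorem does not apply and the finitely many exceptional $n$ must be checked by hand. One could instead try Corollary~\ref{cor:density-positivity} for these cases. For $n=13$ it gives $\frac{13}{30}\cdot3=1.3\geq3^1-1=2$, which is false, so the density criterion is too weak here and explicit verification is the right choice.
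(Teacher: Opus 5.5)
Your proposal is correct and follows essentially the same route as the paper's proof. Both reduce the case $P\geq7$ to Theorem~\ref{thm:half-period} via $n\geq P^2/2\geq3P$. Both settle $P=5$ by applying the half-period theorem for $n\geq15$ and checking $26=7+19$ and $28=11+17$ directly.
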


\begin{proof}
Suppose first that $P\geq7$. Condition
\eqref{eq:product-cutoff} gives
\[
n\geq\frac{P^2}{2}\geq3P.
\]
The conclusion follows from Theorem~\ref{thm:half-period}.

The only remaining possibility is $P=5$. In this case,
\[
5\leq\sqrt{2n}
\]
implies $n\geq13$. For the two values not covered by the
half-period theorem, admissible representations are
\[
26=7+19
\qquad
(n=13)
\]
and
\[
28=11+17
\qquad
(n=14).
\]
For $n\geq15$, Theorem~\ref{thm:half-period} applies.
\end{proof}

\begin{remark}
The density criterion and the half-period theorem are not
comparable.

For $P=35$ and $n=106$, one has $\kappa_P(n)=15$. The
half-period theorem applies because $106\geq105$, whereas
\[
\frac{106\cdot15}{210}=\frac{53}{7}<8=3^2-1,
\]
so the density criterion does not apply.

For $P=385$ and $n=446$, one has $\kappa_P(n)=135$. In this
case
\[
\frac{446\cdot135}{2310}=\frac{2007}{77}>26=3^3-1,
\]
so the density criterion applies, whereas the half-period
condition fails because $446<1155$. Thus neither criterion
makes the other redundant.
\end{remark}

\section{The prime cutoff and the deterministic boundary}

For every $n\geq13$, define
\begin{equation}
\label{eq:Pi-definition}
\Pi(n)
=
\prod_{\substack{p\ \mathrm{prime}\\
5\leq p\leq\sqrt{2n}}}p.
\end{equation}
Thus $\Pi(n)$ contains every prime, other than $2$ and $3$, up
to the natural factorization threshold for integers smaller than
$2n$.

The notation $g_{\Pi(n)}(2n)$ is understood pointwise. For each
fixed $n$, the preceding theory is applied with
\[
P=\Pi(n).
\]
Since this modulus varies with $n$, the affine periodicity
theorem for a fixed $P$ is not being applied to the sequence
$g_{\Pi(n)}(2n)$.

\begin{proposition}[Monotonicity and extremality]
\label{prop:cutoff-monotonicity}
Let $P$ and $Q$ be squarefree products of primes at least $5$.
If $P\mid Q$, then, for every positive integer $n$,
\begin{equation}
\label{eq:monotonicity}
g_Q(2n)\leq g_P(2n).
\end{equation}
Consequently, if every prime factor of $P$ is at most
$\sqrt{2n}$, then
\begin{equation}
\label{eq:cutoff-extremal}
g_{\Pi(n)}(2n)\leq g_P(2n).
\end{equation}
In particular, $g_{\Pi(n)}(2n)>0$ if and only if
$g_P(2n)>0$ for every squarefree product $P$ of primes in the
interval $[5,\sqrt{2n}]$.
\end{proposition}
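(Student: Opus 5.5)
The plan is to work directly from the defining set in \eqref{eq:g-definition}, or equivalently \eqref{eq:g-short-interval}, rather than from the CRT formula. Put $M_P=6P$ and $M_Q=6Q$. Since $P\mid Q$, every prime dividing $M_P$ also divides $M_Q$. Hence, for every integer $h$,
\[
\gcd(h,M_Q)=1\ \Longrightarrow\ \gcd(h,M_P)=1,
\]
and the same implication holds for $2n-h$.

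The set of $h\in[1,n]$ counted by $g_Q(2n)$ is therefore contained in the set counted by $g_P(2n)$. This inclusion of finite sets gives \eqref{eq:monotonicity} immediately. The argument does not depend on $n$ and includes degenerate cases such as $n=2$, where both sides vanish.

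For \eqref{eq:cutoff-extremal}, I will note that $\Pi(n)$ is by definition the product of all primes in $[5,\sqrt{2n}]$. If every prime factor of $P$ lies in that range, then, since $P$ is squarefree and its primes are at least $5$, we get $P\mid\Pi(n)$. Applying \eqref{eq:monotonicity} with $Q=\Pi(n)$ yields the claim.

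Two small bookkeeping points need checking. First, $\Pi(n)$ must be an admissible modulus in the sense of the paper, that is, a nonempty product. For $n\geq13$ one has $\sqrt{2n}\geq5$, which is why $\Pi(n)$ was defined only for $n\geq13$. Second, for $n<13$ no prime $\geq5$ is at most $\sqrt{2n}$, so the hypothesis ``every prime factor of $P$ is at most $\sqrt{2n}$'' is vacuous there and the statement is only meaningful for $n\geq13$.

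For the final equivalence, one direction is that if $g_{\Pi(n)}(2n)>0$, then \eqref{eq:cutoff-extremal} gives $g_P(2n)>0$ for every such $P$. For the other direction, if $g_P(2n)>0$ for every squarefree product of primes in $[5,\sqrt{2n}]$, I will take $P=\Pi(n)$ itself, which is one of those products.

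There is no real obstacle here. The only care needed is the containment of prime sets and the range $n\geq13$ that makes $\Pi(n)$ a legitimate nonempty modulus.
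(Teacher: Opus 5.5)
Your proposal is correct and is essentially the paper's own argument: the set counted by $g_Q(2n)$ lies inside the set counted by $g_P(2n)$ because every prime of $6P$ divides $6Q$, then $P\mid\Pi(n)$ under the cutoff hypothesis, and the converse of the final equivalence follows by taking $P=\Pi(n)$. Your bookkeeping on $n\geq13$, which makes $\Pi(n)$ a nonempty admissible modulus, is a correct clarification that the paper leaves implicit; strictly, for $n<13$ the hypothesis on $P$ is unsatisfiable rather than vacuous, so the implication holds vacuously.
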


\begin{proof}
Every representation whose summands are coprime to $6Q$ also
has both summands coprime to $6P$. Hence the set counted by
$g_Q(2n)$ is contained in the set counted by $g_P(2n)$, which
proves \eqref{eq:monotonicity}. Under the stated cutoff on the
prime factors, one has $P\mid\Pi(n)$, and
\eqref{eq:cutoff-extremal} follows.

For the final assertion, positivity at the complete cutoff
implies positivity for every such $P$ by
\eqref{eq:cutoff-extremal}. The converse follows by taking
$P=\Pi(n)$.
\end{proof}

\begin{proposition}[Prime detection at the cutoff]
\label{prop:cutoff-prime}
Let $n\geq13$ and let
\[
2\leq u\leq2n-1.
\]
If
\[
\gcd\bigl(u,6\Pi(n)\bigr)=1,
\]
then $u$ is a prime and
\[
u>\sqrt{2n}.
\]
\end{proposition}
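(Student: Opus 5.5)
The plan is the classical trial-division argument. Since $\gcd\bigl(u,6\Pi(n)\bigr)=1$, the integer $u$ is divisible neither by $2$ nor by $3$, nor by any prime $p$ with $5\leq p\leq\sqrt{2n}$. Combining these, $u$ has no prime divisor $p\leq\sqrt{2n}$. Here we use that $2,3\leq\sqrt{2n}$ (indeed $n\geq13$ gives $\sqrt{2n}\geq\sqrt{26}>5$), so every prime up to $\sqrt{2n}$ divides $6\Pi(n)$.

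To show primality, suppose $u$ were composite. Since $u\geq2$, we could write $u=ab$ with $2\leq a\leq b$. Then its least prime factor $q$ would satisfy $q^2\leq u\leq2n-1<2n$, hence $q<\sqrt{2n}$. This contradicts the previous paragraph, so $u$ is prime.

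For the size bound, $u$ is a prime dividing itself, so it cannot lie in $[2,\sqrt{2n}]$: every prime there divides $6\Pi(n)$, which would make $\gcd\bigl(u,6\Pi(n)\bigr)=u>1$. Therefore $u>\sqrt{2n}$.

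There is no real obstacle. The only points needing care are that the primes $2$ and $3$, which are excluded from $\Pi(n)$ by definition \eqref{eq:Pi-definition}, are supplied by the factor $6$, and that the hypothesis $n\geq13$ ensures $\Pi(n)\neq1$. The inequality $u<2n$, which yields the strict bound $q<\sqrt{2n}$, is immediate from $u\leq 2n-1$.
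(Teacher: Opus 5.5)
Your proof is correct and follows essentially the same trial-division argument as the paper: a composite $u\le 2n-1$ would have a prime factor $q<\sqrt{2n}$ dividing $6\Pi(n)$, and a prime $u\le\sqrt{2n}$ would itself divide $6\Pi(n)$. One small point: the remark that $2,3\le\sqrt{2n}$ is not actually needed, since $2$ and $3$ divide the factor $6$ regardless of $n$.
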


\begin{proof}
Suppose that $u$ is composite. Then $u$ has a prime divisor $q$
satisfying
\[
q\leq\sqrt u<\sqrt{2n}.
\]
If $q\in\{2,3\}$, then
\[
\gcd\bigl(u,6\Pi(n)\bigr)>1.
\]
If $q\geq5$, then $q$ occurs among the prime factors of
$\Pi(n)$, which gives the same contradiction. Therefore $u$
is prime.

It remains to locate this prime. If $u\leq\sqrt{2n}$, then
$u$ itself occurs among the prime factors of $6\Pi(n)$. This
again contradicts the coprimality hypothesis. Hence
$u>\sqrt{2n}$.
\end{proof}

\begin{corollary}[Cutoff characterization]
\label{cor:cutoff-equivalence}
For every $n\geq13$,
\begin{equation}
\label{eq:cutoff-equivalence}
\begin{aligned}
g_{\Pi(n)}(2n)>0
\quad\Longleftrightarrow\quad
&
\bigl(2n-1\text{ is prime}\bigr)
\\
&\text{or}
\\[-2pt]
&
\left(
\begin{array}{c}
\text{there exist primes }
q_1,q_2>\sqrt{2n}\\
\text{such that }q_1+q_2=2n
\end{array}
\right).
\end{aligned}
\end{equation}
\end{corollary}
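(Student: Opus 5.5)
The plan is to prove both directions of \eqref{eq:cutoff-equivalence} directly from the definition \eqref{eq:g-definition} with $P=\Pi(n)$. Proposition~\ref{prop:cutoff-prime} will handle every summand $u\geq2$. The only delicate point is the summand $h=1$: it is coprime to everything but is not a prime. This is exactly why the disjunct ``$2n-1$ is prime'' appears in the statement. So I will split according to whether the smaller summand equals $1$.

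For ($\Rightarrow$), suppose $g_{\Pi(n)}(2n)>0$. Then there is a pair $(h,k)$ with $1\leq h\leq k$, $h+k=2n$ and $\gcd(h,6\Pi(n))=\gcd(k,6\Pi(n))=1$.
\begin{itemize}
\item If $h=1$, then $k=2n-1$ satisfies $2\leq k\leq 2n-1$. Proposition~\ref{prop:cutoff-prime} then shows that $2n-1$ is prime.
\item If $h\geq2$, then $2\leq h\leq k\leq 2n-2$, so both summands lie in $[2,2n-1]$. Proposition~\ref{prop:cutoff-prime}, applied to each, shows that $h$ and $k$ are primes exceeding $\sqrt{2n}$. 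Take $q_1=h$ and $q_2=k$.
\end{itemize}

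For ($\Leftarrow$), I use one observation: a prime $q>\sqrt{2n}$ is coprime to $6\Pi(n)$. Every prime factor of $\Pi(n)$ is at most $\sqrt{2n}$, and $\sqrt{2n}\geq\sqrt{26}>5$, so $q\notin\{2,3\}$ and $q$ does not divide $\Pi(n)$.
\begin{itemize}
\item Suppose $2n-1$ is prime. Since $n\geq13$, we have $2n-1\geq25>\sqrt{2n}$, so $2n-1$ is coprime to $6\Pi(n)$. Hence the pair $(1,2n-1)$ is admissible, with $1\leq 2n-1$.
\item Suppose $q_1,q_2>\sqrt{2n}$ are primes with $q_1+q_2=2n$. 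Order them so that $q_1\leq q_2$. The observation gives admissibility of $(q_1,q_2)$.
\end{itemize}
In either case $g_{\Pi(n)}(2n)\geq1$.

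I expect no real obstacle. The only care needed is the bookkeeping at the boundary: excluding $u=1$ from Proposition~\ref{prop:cutoff-prime} (which requires $u\geq2$), and confirming that $n\geq13$ makes $2n-1$ exceed $\sqrt{2n}$ and keeps all relevant primes away from $2$ and $3$.
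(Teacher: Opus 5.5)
Your proof is correct and follows the paper's argument essentially step for step. You split on whether the smaller summand is $1$, apply Proposition~\ref{prop:cutoff-prime} in the forward direction, and use that primes exceeding $\sqrt{2n}$ cannot divide $6\Pi(n)$ for the converse. One small slip: the chain $2n-1\geq25>\sqrt{2n}$ is false once $n\geq313$. What you need is only $2n-1>\sqrt{2n}$, which holds for all $n\geq2$ because $(2n-1)^2-2n=4n^2-6n+1>0$.
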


\begin{proof}
Suppose that $g_{\Pi(n)}(2n)>0$, and let
\[
2n=h+k,
\qquad
1\leq h\leq k,
\]
be an admissible representation.

If $h=1$, then $k=2n-1$. Since $k\geq2$ and is coprime to
$6\Pi(n)$, Proposition~\ref{prop:cutoff-prime} shows that
$2n-1$ is prime.

If $h\geq2$, then both $h$ and $k$ lie between $2$ and
$2n-1$. Proposition~\ref{prop:cutoff-prime} shows that they
are primes larger than $\sqrt{2n}$. Taking
\[
q_1=h,
\qquad
q_2=k
\]
gives the second alternative.

Conversely, suppose first that $2n-1$ is prime. Since
$n\geq13$,
\[
2n-1>\sqrt{2n}>3.
\]
The prime $2n-1$ therefore divides neither $6$ nor $\Pi(n)$.
Thus
\[
2n=1+(2n-1)
\]
is an admissible representation.

Now suppose that
\[
2n=q_1+q_2,
\qquad
q_1,q_2>\sqrt{2n},
\]
with $q_1$ and $q_2$ prime. Neither prime divides
$6\Pi(n)$, because all prime divisors of this modulus are at
most $\sqrt{2n}$. After ordering the summands, this gives an
admissible representation counted by $g_{\Pi(n)}(2n)$.
\end{proof}

\begin{remark}[Precise relation with Goldbach]
If $2n-1$ is composite, then
\[
g_{\Pi(n)}(2n)>0
\]
is equivalent to the existence of a Goldbach representation
\[
2n=q_1+q_2
\]
in which both primes exceed $\sqrt{2n}$.

If $2n-1$ is prime, the admissible representation
\[
2n=1+(2n-1)
\]
makes $g_{\Pi(n)}(2n)$ positive, but it is not a Goldbach
representation because $1$ is not prime. Consequently, uniform
positivity of $g_{\Pi(n)}(2n)$ is not literally equivalent to
the full Goldbach conjecture.

Nevertheless, for those $n$ such that $2n-1$ is composite,
positivity at the complete prime cutoff is a strong
Goldbach-type assertion.
\end{remark}

\begin{remark}[Size of the cutoff product]
Let
\[
\vartheta(x)
=
\sum_{\substack{p\leq x\\p\ \mathrm{prime}}}\log p
\]
denote Chebyshev's first function. Since $n\geq13$, the primes
$2$ and $3$ lie below $\sqrt{2n}$, and therefore
\begin{equation}
\label{eq:Pi-growth}
\log\Pi(n)
=
\vartheta\bigl(\sqrt{2n}\bigr)-\log6.
\end{equation}
The prime number theorem \cite[Chapter~13]{Apostol1976} gives
\[
\vartheta(x)\sim x
\]
and hence
\begin{equation}
\label{eq:Pi-asymptotic}
\Pi(n)
=
\exp\left(
(1+o(1))\sqrt{2n}
\right)
\qquad
(n\to\infty).
\end{equation}
In particular,
\[
\frac{\Pi(n)}{n}\longrightarrow\infty.
\]
Thus the hypothesis
\[
P\leq\sqrt{2n}
\]
of Corollary~\ref{cor:product-cutoff} fails for
$P=\Pi(n)$ whenever $n$ is sufficiently large.

This explains the essential difference between the established
product condition
\[
P\leq\sqrt{2n}
\]
and the broader hypothesis
\[
\max_{1\leq i\leq r}p_i\leq\sqrt{2n}.
\]
The latter hypothesis is logically weaker, but a positivity
theorem under it would be correspondingly stronger.
For $P=\Pi(n)$, the second condition holds by construction,
while the product itself is exponentially large on the
$\sqrt n$ scale.
\end{remark}

At the complete cutoff, Proposition~\ref{prop:covering}
translates the failure of positivity into an exact covering of
the parameter interval by the local excluded classes. This is a
deterministic event. A statistical study would first have to
specify an ensemble for $n$, for the CRT-compatible vectors
$(x_i(n),y_i(n))_{1\leq i\leq r}$, or for the varying prime
cutoffs, while retaining the collision
relations, the reflection coupling, and the Chinese-remainder
constraints. Such a study is left for subsequent work. In
particular, proving that an exceptional set has density zero
would not by itself prove that the exceptional set is empty.

\section{Conclusion}

We have obtained an exact formula for restricted coprime
representations of even integers relative to every squarefree
modulus
\[
6P=6p_1\cdots p_r,
\]
where the primes $p_i\geq5$ are distinct. The canonical
remainder operator identifies the two local exclusions
contributed by each prime, while the collision lemma shows that
these exclusions merge exactly when
$\delta_{p_i}(n)=0$. Inclusion--exclusion and the Chinese
remainder theorem then give an exact formula with at most
$3^r$ terms.

The finite correlation $\kappa_P(n)$ determines both the affine
increment and the principal density term. The resulting
discrepancy is smaller than $3^r-1$, and the paired-gap
interpretation yields the deterministic implications
\[
n\geq3P
\quad\Longrightarrow\quad
g_P(2n)>0
\]
and
\[
P\leq\sqrt{2n}
\quad\Longrightarrow\quad
g_P(2n)>0.
\]
Among all squarefree products whose prime factors do not exceed
$\sqrt{2n}$, the complete cutoff $P=\Pi(n)$ is extremal. It lies
beyond the established product range and isolates the remaining
Goldbach-type obstruction. Its statistical analysis requires a
separate framework and is not assumed in the present work.


\end{document}